\theoremstyle{plain}
\newtheorem{theorem}{Theorem}
\newtheorem{corollary}[theorem]{Corollary}
\newtheorem{lemma}[theorem]{Lemma}
\numberwithin{equation}{section}
\begin{document}

\title[Identities for Bernoulli polynomials]{Identities for Bernoulli polynomials related to multiple Tornheim zeta functions}

\author{Karl Dilcher}
\address{Department of Mathematics and Statistics\\
         Dalhousie University\\
         Halifax, Nova Scotia, B3H 3J5, Canada}
\email{dilcher@mathstat.dal.ca}

\author{Armin Straub}
\address{Department of Mathematics and Statistics\\
University of South Alabama\\
Mobile, AL 36688, USA}
\email{straub@southalabama.edu}

\author{Christophe Vignat}
\address{LSS-Supelec, Universit\'e Paris-Sud, Orsay, France and Department of
Mathematics, Tulane University, New Orleans, LA 70118, USA}
\email{cvignat@tulane.edu}

\keywords{Bernoulli polynomials, Bernoulli numbers, Eulerian polynomials, 
convolution identities} 
\subjclass[2010]{11B68}
\thanks{Research supported in part by the Natural Sciences and Engineering
        Research Council of Canada, Grant \# 145628481}

\date{}

\setcounter{equation}{0}

\begin{abstract}
We show that each member of a doubly infinite sequence of highly nonlinear 
expressions of Bernoulli polynomials, which can be seen as linear combinations 
of certain higher-order convolutions, is a multiple of a specific product of
linear factors. The special case of Bernoulli numbers has important 
applications in the study of multiple Tornheim zeta functions. The proof of
the main result relies on properties of Eulerian polynomials and higher-order
Bernoulli polynomials.
\end{abstract}

\maketitle

\section{Introduction}

Various convolution identities for Bernoulli polynomials and related numbers
and polynomials have attracted considerable attention in recent years. In
connection with a detailed study of multiple Tornheim zeta functions,
the first author \cite{Di,DT} recently obtained what appears
to be a new type of identity, namely
\begin{equation}\label{1.1}
\sum_{m=1}^n\binom{n+1}{m}\sum_{\substack{j_1,\ldots,j_m\geq 1\\j_1+\dots+j_m=n}}
\prod_{i=1}^m\frac{B_{j_i}(z)}{j_i!} 
= \frac{1}{n!}\prod_{j=1}^n\bigl((n+1)z-j\bigr),
\end{equation}
for $n\geq 1$. Here $B_k(z)$ is the $k$th {\it Bernoulli polynomial\/}, which 
can be defined by the generating function 
\begin{equation}\label{1.2}
\frac{xe^{zx}}{e^x-1} = \sum_{k=0}^\infty B_k(z)\frac{x^k}{k!},\qquad |x|<2\pi,
\end{equation}
and the $k$th {\it Bernoulli number\/} is defined by $B_k:=B_k(0)$, $k\geq 0$.
The first few Bernoulli polynomials are listed in Table~2 in Section~2 below.

A notable feature of the identity \eqref{1.1} is the fact that an easy linear
combination of convolutions results in a product of $n$ monomials. When the
Bernoulli polynomials on the left are replaced by the corresponding Bernoulli
{\it numbers\/}, i.e., setting $z=0$, then the right-hand side is simply 
$(-1)^n$.

Also useful in connection with multiple Tornheim zeta functions is an identity
that involves a generalization of the left-hand side of \eqref{1.1}. For all
integers $n\geq 1$ and $k\geq 0$, we define
\begin{equation}\label{1.3}
S_{n,k}(z):=
\sum_{m=1}^n\binom{n+1}{m}k!^{n-m}(-1)^{km}
\sum_{\substack{j_1,\ldots,j_m\geq 0\\j_1+\dots+j_m=(k+1)(n-m)+k}}
\prod_{i=1}^m\frac{B_{k+1+j_i}(z)}{j_i!(k+1+j_i)}.
\end{equation}
When $k=0$, a simple shift in indexing shows that $S_{n,0}(z)$ is the left-hand
side of \eqref{1.1}. The methods used in \cite{Di} for $k=0$ do not appear to 
apply when $k\geq 1$. It is the purpose of this paper to deal with this case.
To motivate our main result, we list the first few cases of $S_{n,1}(z)$
and $S_{n,2}(z)$ in Table~1.

\bigskip
\begin{center}
{\renewcommand{\arraystretch}{1.2}
\begin{tabular}{|r||l|l|}
\hline
$k$ & $n$ & $S_{n,k}(z)$ \\
\hline
1 & 1 & $\frac{-1}{3}z(z-1)(2z-1)$ \\
1 & 2 & $\frac{1}{20}z(z-1)(3z-1)(3z-2)(2z-1)$ \\
1 & 3 & $\frac{-1}{105}z(z-1)(4z-1)(2z-1)(4z-3)(z^2-z+1)$ \\
1 & 4 & $\frac{-1}{18144}z(z-1)(5z-1)(5z-2)(5z-3)(5z-4)(2z-1)(13z^2-13z-6)$ \\
\hline
2 & 1 & $\frac{1}{30}z(z-1)(2z-1)(3z^2-3z-1)$ \\
2 & 2 & $\frac{1}{160}z^2(z-1)^2(3z-1)(3z-2)(7z^2-7z-2)$ \\
2 & 3 & $\frac{1}{20790}z(z-1)(4z-1)(2z-1)(4z-3)(321z^6-\cdots-3)$ \\
2 & 4 & $\frac{1}{16765056}z^2(z-1)^2(5z-1)\cdots(5z-4)(19302z^6-\cdots-348)$ \\
\hline
\end{tabular}}

\medskip
{\bf Table~1}: $S_{n,1}(z)$ and $S_{n,2}(z)$ for $1\leq n\leq 4$.
\end{center}
\bigskip

We see that for both $k=1$ and $k=2$, the polynomial $S_{n,k}(z)$ is divisible 
by $z((n+1)z-1)\cdots((n+1)z-(n+1))$. It is the main purpose of this paper to
prove that this observation is in fact true for all $n$ and $k$.

\begin{theorem}\label{thm:1}
For all integers $n\geq 1$ and $k\geq 1$, the polynomial $S_{n,k}(z)$ 
satisfies 
\begin{equation}\label{1.4}
S_{n,k}(1-z) = (-1)^{(k+1)(n+1)-1}S_{n,k}(z),
\end{equation}
and is divisible by 
\[
z\prod_{j=1}^{n+1}\bigl((n+1)z-j\bigr).
\]
\end{theorem}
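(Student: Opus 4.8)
The plan is to handle the symmetry \eqref{1.4} first, then collapse $S_{n,k}(z)$ to a single residue, and finally read off the $n+2$ roots $z=p/(n+1)$, $p=0,1,\dots,n+1$, from a degree count. The symmetry is immediate from $B_j(1-z)=(-1)^jB_j(z)$: replacing $z$ by $1-z$ in \eqref{1.3} multiplies each inner product by $(-1)^{\sum_i(k+1+j_i)}$, and since $\sum_i(k+1+j_i)=m(k+1)+\bigl((k+1)(n-m)+k\bigr)=(k+1)n+k$, this sign is the same for every $m$ and every composition. As $(k+1)n+k=(k+1)(n+1)-1$, this yields \eqref{1.4}.

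The main reduction is to recognize the inner sum in \eqref{1.3} as $[x^{(k+1)(n-m)+k}]F(x,z)^m$, where $F(x,z)=\sum_{j\ge0}\frac{B_{k+1+j}(z)}{j!(k+1+j)}x^j$. Writing $\Psi=x^{k+1}F$ makes the extraction index uniformly $(k+1)n+k$, and the binomial theorem sums over $m$ once the $m=0$ and $m=n+1$ terms are adjoined; these contribute nothing to $[x^{(k+1)n+k}]$ (one is a constant, the other has order $(k+1)(n+1)>(k+1)n+k$), so $S_{n,k}(z)=k!^n[x^{(k+1)n+k}]T_k(x,z)^{n+1}$ where $k!\,T_k=k!+(-1)^k\Psi$. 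The crucial simplification is a closed form for $T_k$: writing $k!\,T_k=(-1)^k\prod_{j=1}^k(x\partial_x-j)g$ with $g=\frac{xe^{zx}}{e^x-1}$ (equivalently $T_k=\sum_{r=0}^k\frac{(-x)^r}{r!}\partial_x^r g$) and using the operator identity $\prod_{j=1}^k(x\partial_x-j)h=x^{k+1}\partial_x^k(h/x)$ gives $T_k(x,z)=\frac{(-1)^k}{k!}x^{k+1}\partial_x^k\phi(x,z)$ with $\phi=\frac{e^{zx}}{e^x-1}$. Substituting and cancelling the power of $x$ converts the coefficient into a residue,
\[
S_{n,k}(z)=\frac{(-1)^{k(n+1)}}{k!}\operatorname*{Res}_{x=0}\bigl(\partial_x^k\phi(x,z)\bigr)^{n+1}.
\]

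To extract the roots I would substitute $t=e^x-1$ and use $\partial_x^k\phi=(1+t)^zM_k(t,z)/t^{k+1}$, where $M_k$ is a polynomial in $t$ of degree at most $k$, obtaining
\[
S_{n,k}(z)=\frac{(-1)^{k(n+1)}}{k!}\,[t^{(k+1)(n+1)-1}]\Bigl((1+t)^{(n+1)z-1}M_k(t,z)^{n+1}\Bigr).
\]
For $z=p/(n+1)$ with $1\le p\le n$ the factor $(1+t)^{p-1}$ is a polynomial of degree $p-1$, so the bracket has $t$-degree at most $(p-1)+k(n+1)<(k+1)(n+1)-1$ and the coefficient vanishes. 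For the two endpoints I would verify two facts about the derivatives of $1/(e^x-1)$ (the Eulerian-polynomial input): the leading $t^k$-coefficient of $M_k$ equals $(z-1)^k$, so it vanishes at $z=1$ and lowers the degree enough, while $M_k(t,0)=t^{k+1}\partial_x^k\frac{1}{e^x-1}$ has $t=-1$ as a root, which kills the coefficient at $z=0$. Together these give the $n+2$ roots, hence the factor $z\prod_{j=1}^{n+1}((n+1)z-j)$.

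I expect the main obstacle to be the third step, namely the passage to the residue (the operator identity, and checking that the endpoint terms genuinely drop), followed by the three elementary-but-essential facts $\deg_tM_k\le k$, leading coefficient $(z-1)^k$, and $M_k(t,0)|_{t=-1}=0$; once these are in hand the roots follow from pure degree counting. As a consistency check, the case $k=0$ has $T_0=g$ and $S_{n,0}(z)=B_n^{(n+1)}((n+1)z)/n!$, recovering \eqref{1.1} through the classical factorization $B_n^{(n+1)}(w)=\prod_{j=1}^n(w-j)$; the extra roots at $z=0$ and $z=1$ for $k\ge1$ are exactly what the $M_k$-analysis above produces.
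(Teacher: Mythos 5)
Your proposal is correct, but the way you extract the divisibility is genuinely different from the paper's. The two arguments coincide up to the midpoint: your reduction $S_{n,k}(z)=k!^n[x^{(k+1)(n+1)-1}]T_k(x,z)^{n+1}$ is precisely Lemma~\ref{lem:3} (with $n$ replaced by $n+1$), and your closed form $T_k=\frac{(-1)^k}{k!}x^{k+1}\partial_x^k\bigl(e^{zx}/(e^x-1)\bigr)$ is exactly identity \eqref{2.11}, which the paper obtains by differentiating \eqref{2.10} $k$ times; your operator identity $\prod_{j=1}^k(x\partial_x-j)h=x^{k+1}\partial_x^k(h/x)$ is an equivalent derivation. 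After that, the routes diverge: the paper expands $F_k$ through Eulerian polynomials (Lemma~\ref{lem:4}), takes $n$-th powers, and uses the higher-order Bernoulli factorization \eqref{3.10} to write the coefficient as the sum \eqref{3.11} of products $\prod_r(nz+j-r)$, reading off divisibility from linear factors common to all those products, with $z$ obtained by inspection and $z-1$ from the symmetry \eqref{1.4}; you instead substitute $t=e^x-1$, arrive at $S_{n,k}(z)=\frac{(-1)^{k(n+1)}}{k!}[t^{(k+1)(n+1)-1}]\bigl((1+t)^{(n+1)z-1}M_k(t,z)^{n+1}\bigr)$, and get all $n+2$ roots $z=p/(n+1)$, $0\le p\le n+1$, by pure degree counting, which suffices since the asserted divisor is squarefree with exactly these roots. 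The three facts you flag do hold and follow at once from the recursion $M_{k+1}=ztM_k+t(1+t)\partial_tM_k-(k+1)(1+t)M_k$, $M_0=1$: the degree bound and the leading coefficient $(z-1)^k$ by induction, and $(1+t)\mid M_k(t,0)$ for $k\ge1$ because at $z=0$ every surviving term of the recursion carries the factor $1+t$ (this is $y\mid A_k(y)$ in disguise, the only Eulerian input you need). Your symmetry argument, applying $B_j(1-z)=(-1)^jB_j(z)$ termwise and noting that the resulting sign is independent of $m$ and of the composition, is also correct and slightly more direct than the paper's appeal to Lemma~\ref{lem:3}. As for what each approach buys: yours is shorter and more elementary---no higher-order Bernoulli polynomials, no Eulerian expansion, and the divisibility part does not even need \eqref{1.4}---whereas the paper's explicit expansion \eqref{3.11} retains the full polynomial structure of $S_{n,k}(z)$ and is reused afterwards to prove Theorems~\ref{thm:6} and~\ref{thm:8}, refinements that your single coefficient formula does not yield as immediately.
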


As an immediate consequence of Theorem~\ref{thm:1} we obtain a corresponding 
statement about Bernoulli {\it numbers\/}, which we can phrase as follows.

\begin{corollary}
For all integers $n\geq 1$ and $k\geq 1$ we have $S_{n,k}(0)=S_{n,k}(1)=0$,
and when at least one of $k$ and $n$ is odd, then $S_{n,k}(\frac{1}{2})=0$.
\end{corollary}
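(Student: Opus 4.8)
The plan is to dispatch the reflection formula \eqref{1.4} first, since it is elementary and, moreover, pins down several of the roots in the divisibility statement. Applying the rule $B_m(1-z)=(-1)^m B_m(z)$ to every factor in \eqref{1.3}, each summand acquires the sign $(-1)^{\sum_i(k+1+j_i)}$; as $\sum_i j_i=(k+1)(n-m)+k$ is fixed, the exponent collapses to $\sum_i(k+1+j_i)=(k+1)n+k=(k+1)(n+1)-1$, independent of the composition and of $m$. Hence every term transforms by the same sign and \eqref{1.4} follows. In particular $S_{n,k}(1-z)=\pm S_{n,k}(z)$ pairs a prospective root $z$ with $1-z$, so it suffices to prove vanishing on $0,\tfrac1{n+1},\dots$ up to the midpoint, and $z=\tfrac12$ is forced to be a root whenever $(k+1)(n+1)$ is even, i.e.\ whenever $k$ or $n$ is odd.

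For the divisibility I would first recast $S_{n,k}(z)$ as a single coefficient of a power. Writing $g_k(z,x)=\sum_{j\ge0}\frac{B_{k+1+j}(z)}{j!(k+1+j)}x^j$, the inner convolution in \eqref{1.3} is exactly $[x^{(k+1)(n-m)+k}]g_k(z,x)^m$; passing to $\tilde g_k=x^{k+1}g_k$ makes the target exponent $(k+1)n+k$ uniform in $m$, and summing the binomial coefficients produces $(1+u)^{n+1}-1-u^{n+1}$ with $u=\frac{(-1)^k}{k!}\tilde g_k$. Since $\tilde g_k=O(x^{k+1})$, the $m=0$ and $m=n+1$ terms never reach degree $(k+1)n+k=(k+1)(n+1)-1$ and so drop out. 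Recognizing that $\prod_{i=1}^k(\theta-i)$, with $\theta=x\frac{d}{dx}$, multiplies the coefficient of $x^r$ by $(r-1)\cdots(r-k)$, one verifies $1+\frac{(-1)^k}{k!}\tilde g_k=\frac{(-1)^k}{k!}\prod_{i=1}^k(\theta-i)\phi_z$ with $\phi_z(x)=\frac{xe^{zx}}{e^x-1}$, the constant terms cancelling exactly. This yields the compact representation
\[
S_{n,k}(z)=\frac{(-1)^{k(n+1)}}{k!}\,[x^{(k+1)n+k}]\Bigl(\prod_{i=1}^k(\theta-i)\,\phi_z\Bigr)^{n+1},
\]
which for $k=0$ collapses to $\tfrac1{n!}B_n^{(n+1)}((n+1)z)=\tfrac1{n!}\prod_{j=1}^n((n+1)z-j)$ and recovers \eqref{1.1}.

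To extract the roots I set $w=(n+1)z$ and use the dilation identity $\prod_{i=1}^k(\theta-i)\phi_z(x)=k!\,[y^k]\frac{1}{1+y}\phi_z((1+y)x)$. Raising to the power $n+1$ with independent variables $y_1,\dots,y_{n+1}$ turns the bracket into a higher-order Bernoulli (Nörlund) generating function with parameters $1+y_i$, so that, with $N=(k+1)n+k$,
\[
S_{n,k}(z)=\frac{(-1)^{k(n+1)}k!^{\,n}}{N!}\,[y_1^k\cdots y_{n+1}^k]\,\frac{B_N^{(n+1)}\bigl(w+z\textstyle\sum_i y_i \mid 1+y_1,\dots,1+y_{n+1}\bigr)}{\prod_i(1+y_i)}.
\]
The $[y_i^k]$–extraction from this dilated generating function is precisely where Eulerian polynomials enter, and the required roots $w\in\{0,1,\dots,n+1\}$ are to be read off from the factorization and reflection properties of these Nörlund polynomials, generalizing $B_n^{(n+1)}(w)=\prod_{j=1}^n(w-j)$.

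I expect this last step to be the crux. The operator $\prod_{i=1}^k(\theta-i)$ sits inside the $(n+1)$st power and does not commute with it, so the clean $k=0$ factorization cannot simply be transported. The genuinely hard part is the simultaneous vanishing at all $n+2$ points: the interior roots $w=1,\dots,n$ and, beyond the $k=0$ pattern, the two endpoints $w=0,n+1$. A parity observation handles the endpoints cheaply in some cases — the factor $\theta-1$ annihilates the degree-one term, so $\prod_{i=1}^k(\theta-i)\phi_0$ is even and $[x^N]$ of its $(n+1)$st power vanishes when $N=(k+1)n+k$ is odd — but this leaves the case $k,n$ both even, together with all of the interior roots, to be settled by the finer Eulerian/Nörlund input.
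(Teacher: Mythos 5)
Your argument, as it stands, proves the corollary only when at least one of $n$, $k$ is odd; the case where both are even is a genuine gap. The claim $S_{n,k}(0)=S_{n,k}(1)=0$ is asserted for \emph{all} $n,k\geq 1$, but both of your tools degenerate in the even--even case: the reflection formula \eqref{1.4} then has sign $(-1)^{(k+1)(n+1)-1}=+1$, so it forces no vanishing at $z=0$, $1$, or $\tfrac12$; and your parity argument --- that $\prod_{i=1}^k(\theta-i)\phi_0$ is an even power series because $\theta-1$ kills the lone odd term $-x/2$ of $\phi_0$, so the coefficient of $x^{(k+1)(n+1)-1}$ in its $(n+1)$st power vanishes when that exponent is odd --- applies exactly when $k$ or $n$ is odd, and never otherwise. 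You acknowledge this yourself and defer the remaining cases to unspecified ``factorization and reflection properties'' of the dilated N\"orlund polynomials in your final display, but that deferred step is not a routine verification: it is essentially the whole content of the paper's Sections~2--3, namely the Eulerian-polynomial representation \eqref{2.9} of $F_k(x,z)$, the rearrangement \eqref{3.4}--\eqref{3.7} that isolates the coefficients $d_j^{(\nu)}$, and the evaluation \eqref{3.10} of $B_{m-1}^{(m)}$, which combine into \eqref{3.11}. There, divisibility by $z$ (hence $S_{n,k}(0)=0$ in \emph{all} parities) rests on the structural fact that $d_0^{(\nu)}=0$ for $\nu\geq 1$, so that the only $j=0$ term --- the only one whose product $\prod_r(nz+j-r)$ is not itself divisible by $z$ --- is accompanied by the factor $z^{nk}$. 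Nothing in your proposal substitutes for this; for instance, the vanishing of $S_{2,2}$ and $S_{4,2}$ at $z=0,1$ visible in Table~1 is not explained by your argument.

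That said, the parts you do carry out are correct, and two of them are arguably cleaner than the paper's own route. Your direct verification of \eqref{1.4} from the definition \eqref{1.3} --- the exponent $\sum_i(k+1+j_i)=(k+1)(n+1)-1$ is independent of $m$ and of the composition --- is more elementary than the paper's derivation of \eqref{1.4} via Lemma~\ref{lem:3} and \eqref{2.1a}, and it fully settles $S_{n,k}(\tfrac12)=0$ when $k$ or $n$ is odd. Your coefficient identity $S_{n,k}(z)=k!^{\,n}[x^{(k+1)(n+1)-1}]F_k(x,z)^{n+1}$ is precisely Lemma~\ref{lem:3} (your $1+u$ equals $F_k(x,z)$ by \eqref{2.1}), and the operator form $F_k(x,z)=\frac{(-1)^k}{k!}\prod_{i=1}^k(\theta-i)\,\phi_z(x)$ with $\theta=x\frac{d}{dx}$ checks out and is a nice reformulation. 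But to turn the proposal into a proof of the full corollary you must either carry out the N\"orlund/Eulerian analysis you sketch (which amounts to reproving Theorem~\ref{thm:1}, as the paper does), or supply a separate argument for $S_{n,k}(0)=0$ when $n$ and $k$ are both even.
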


In particular, since $B_m(0)=B_m$, this means that the right-hand side of 
\eqref{1.3}, with Bernoulli polynomials replaced by Bernoulli numbers, is 0
for all $n\geq 1$ and $k\geq 1$.

In order to prove Theorem~\ref{thm:1}, we define an auxiliary power series
in Section~2 and prove some lemmas involving this series. In Section~3 we 
complete the proof of Theorem~\ref{thm:1}. Section~4 contains a few further
results, and we conclude this paper with some additional remarks in Section~5.

\section{Some lemmas}

We introduce a specific power series as an auxiliary function. For any integer
$k\geq 0$ and complex variable $z$ we define
\begin{equation}\label{2.1}
F_k(x,z) := 1+(-1)^k\frac{x^{k+1}}{k!}\sum_{m=0}^\infty 
\frac{B_{m+k+1}(z)}{m+k+1}\frac{x^m}{m!}.
\end{equation}
This series has the same radius of convergence, $2\pi$, as \eqref{1.2}.
Using the reflection identity for Bernoulli polynomials, namely 
$B_n(1-x)=(-1)^nB_n(x)$ (see, e.g., \cite[24.4.3]{DLMF}), it is easy to verify
that
\begin{equation}\label{2.1a}
F_k(-x,1-z) = F_k(x,z).
\end{equation}
The function $F_k(x,0)$, i.e., the case where on the right of \eqref{2.1} we
have Bernoulli {\it numbers} instead of polynomials, has previously been 
studied and applied by several authors; see \cite{CW}, \cite{Ei}, and 
\cite{On}. 

In what follows we denote the coefficient of $x^k$, $k\geq 0$, in a power 
series $f(x)$ by $[x^k]f(x)$. We can now state and prove the following result.

\begin{lemma}\label{lem:3}
For all $n\geq 1$ and $k\geq 0$ we have
\begin{equation}\label{2.2}
\frac{S_{n-1,k}(z)}{k!^{n-1}} = [x^{(k+1)n-1}]F_k(x,z)^n. 
\end{equation}
\end{lemma}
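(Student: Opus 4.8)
The plan is to recognize the right-hand side as a binomial expansion and then extract the relevant coefficient directly. First I would write $F_k(x,z) = 1 + G_k(x,z)$, where
\[
G_k(x,z) := (-1)^k\frac{x^{k+1}}{k!}\sum_{m=0}^\infty \frac{B_{m+k+1}(z)}{m+k+1}\frac{x^m}{m!},
\]
so that $G_k$ is a power series whose lowest-degree term is of order $x^{k+1}$. Applying the binomial theorem gives $F_k(x,z)^n = \sum_{m=0}^n \binom{n}{m}G_k(x,z)^m$, and the task reduces to computing $[x^{(k+1)n-1}]G_k^m$ for each $m$.

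The second step is a degree count to cut down the range of $m$. Since $G_k^m$ begins at order $x^{m(k+1)}$, the coefficient of $x^{(k+1)n-1}$ vanishes whenever $m(k+1) > (k+1)n-1$; in particular the term $m=n$ contributes nothing, while $m=0$ contributes only to the constant term. Hence only $1 \le m \le n-1$ survive, which already matches the summation range in the definition of $S_{n-1,k}(z)$.

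Next I would expand the $m$th power explicitly as
\[
G_k(x,z)^m = (-1)^{km}k!^{-m}\sum_{j_1,\ldots,j_m\ge0}\left(\prod_{i=1}^m \frac{B_{j_i+k+1}(z)}{(j_i+k+1)\,j_i!}\right)x^{\sum_{i=1}^m(j_i+k+1)},
\]
whose exponent equals $(k+1)n-1$ precisely when $j_1+\cdots+j_m = (k+1)(n-m)-1$. The key arithmetic check is that this is exactly the constraint appearing in $S_{n-1,k}(z)$, namely $(k+1)((n-1)-m)+k$, since $(k+1)((n-1)-m)+k = (k+1)(n-m)-1$.

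Finally, I would assemble the pieces: substituting this coefficient back gives
\[
[x^{(k+1)n-1}]F_k(x,z)^n = \sum_{m=1}^{n-1}\binom{n}{m}(-1)^{km}k!^{-m}\sum_{\substack{j_1,\ldots,j_m\ge0\\j_1+\cdots+j_m=(k+1)(n-m)-1}}\prod_{i=1}^m \frac{B_{j_i+k+1}(z)}{(j_i+k+1)\,j_i!}.
\]
Dividing the definition of $S_{n-1,k}(z)$ by $k!^{n-1}$ turns its prefactor $k!^{(n-1)-m}$ into $k!^{-m}$, and the two expressions then agree term by term. The main obstacle is purely bookkeeping: keeping the index shift $n\mapsto n-1$, the exponent arithmetic, and the powers of $k!$ together with the signs $(-1)^{km}$ aligned throughout. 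There is no conceptual difficulty once the binomial expansion and the degree count are in place.
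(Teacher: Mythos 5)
Your proof is correct and is essentially the paper's own argument: both rest on writing $F_k(x,z)=1+(\text{tail})$, expanding $F_k(x,z)^n$ by the binomial theorem, and matching the coefficient of $x^{(k+1)n-1}$ term by term against the definition of $S_{n-1,k}(z)$, with the same exponent arithmetic $(k+1)((n-1)-m)+k=(k+1)(n-m)-1$ doing the work. The only cosmetic difference is that the paper writes $F_k=1+x^kG_k$ with a series $G_k$ starting at order $x$ and handles the resulting $m$-dependent exponent shifts via Cauchy's integral formula, whereas you absorb the full factor $x^{k+1}$ into your $G_k$ so that all extractions occur at the single exponent $x^{(k+1)n-1}$ --- a purely presentational simplification of the same computation.
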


\begin{proof}
We rewrite \eqref{1.3} as
\begin{equation}\label{2.3}
\frac{S_{n,k}(z)}{k!^n} = \sum_{m=1}^n\binom{n+1}{m}S_{n,k,m}(z)
= \sum_{m=0}^{n+1}\binom{n+1}{m}S_{n,k,m}(z),
\end{equation}
where
\[
S_{n,k,m}(z):=
\sum_{\substack{j_1,\ldots,j_m\geq 1\\j_1+\dots+j_m=k(n+1-m)+n}}
\prod_{i=1}^m\frac{(-1)^kB_{j_i+k}(z)}{k!(j_i-1)!(j_i+k)}.
\]
We now define
\begin{equation}\label{2.4}
G_k(x,z) := \frac{(-1)^k}{k!}\sum_{j=1}^\infty
\frac{B_{j+k}(z)}{(j-1)!(j+k)}x^j
\end{equation}
and first observe that
\begin{equation}\label{2.5}
S_{n,k,m}(z)=[x^{k(n+1-m)+n}]G_k(x,z)^m.
\end{equation}
Then Cauchy's integral formula, applied to the right of \eqref{2.5}, gives
\[
S_{n,k,m}(z)=\frac{1}{2\pi i}\int_{\gamma}\frac{G_k(x,z)^m}{x^{k(n+1-m)+n+1}}dx,
\]
where the contour $\gamma$ traverses, for instance, a circle around the origin
with small enough radius, once in the positive direction. Then, after replacing
$n$ by $n-1$, we get with \eqref{2.3}, 
\begin{align*}
\frac{S_{n-1,k}(z)}{k!^{n-1}} &= \frac{1}{2\pi i}\sum_{m=0}^n\binom{n}{m}
\int_{\gamma}\frac{G_k(x,z)^m}{x^{k(n-m)+n}}dx \\
&= \frac{1}{2\pi i}\int_{\gamma}\frac{1}{x^{(k+1)n}}
\sum_{m=0}^n\binom{n}{m}x^{km}G_k(x,z)^mdx \\
&= \frac{1}{2\pi i}\int_{\gamma}\frac{(1+x^kG_k(x,z))^n}{x^{(k+1)n}}dx \\
&= [x^{(k+1)n-1}](1+x^kG_k(x,z))^n,
\end{align*}
where we have used Cauchy's integral formula again. Finally, since by 
\eqref{2.1} and \eqref{2.4} we have
\[
1+x^kG_k(x,z) = F_k(x,z),
\]
this proves our lemma.
\end{proof}

In what follows, we require the {\it Eulerian polynomials} which can be defined
by the generating function
\begin{equation}\label{2.6}
\frac{1-y}{1-ye^{(1-y)t}} = \sum_{k=0}^\infty A_k(y)\frac{t^k}{k!};
\end{equation}
see, e.g., \cite[p.~244]{Co}.  The first few Eulerian polynomials are 
listed in Table~2.

\bigskip
\begin{center}
{\renewcommand{\arraystretch}{1.2}
\begin{tabular}{|r||l|l|}
\hline
$k$ & $B_k(z)$ & $A_k(y)$ \\
\hline
0 & 1 & 1 \\
1 & $z-\tfrac{1}{2}$ & $y$ \\
2 & $z^2-z+\tfrac{1}{6}$ & $y^2+y$ \\
3 & $z^3-\tfrac{3}{2}z^2+\tfrac{1}{2}z$ & $y^3+4y^2+y$ \\
4 & $z^4-2z^3+z^2-\tfrac{1}{30}$ & $y^4+11y^3+11y^2+y$ \\
5 & $z^5-\tfrac{5}{2}z^4+\tfrac{5}{3}z^3-\tfrac{1}{6}z$ 
& $y^5+26y^4+66y^3+26y^2+y$ \\
6 & $z^6-3z^5+\tfrac{5}{2}z^4-\tfrac{1}{2}z^2+\tfrac{1}{42}$ 
& $y^6+57y^5+302y^4+302y^3+57y^2+y$ \\
\hline
\end{tabular}}

\medskip
{\bf Table~2}: $B_k(z)$ and $A_k(y)$ for $0\leq k\leq 6$.
\end{center}
\bigskip

The Eulerian polynomials are self-reciprocal (or palindromic), and we can write
\begin{equation}\label{2.7}
A_k(y) = \sum_{j=0}^kA(k,j)y^j\quad (k\geq 0).
\end{equation}
The coefficients $A(k,j)$ are the well-known {\it Eulerian numbers\/}, which
have important combinatorial interpretations. We also require the generating
function 
\begin{equation}\label{2.8}
\frac{A_k(y)}{(1-y)^{k+1}} = \sum_{m=0}^\infty m^ky^m;
\end{equation}
see, e.g., \cite[p.~245]{Co}. We can now rewrite the sequence of functions 
$F_k(x,z)$ that was defined in \eqref{2.1}.

\begin{lemma}\label{lem:4}
For all $k\geq 1$ we have
\begin{equation}\label{2.9}
F_k(x,z)=\bigg(\frac{x}{e^x-1}\bigg)^{k+1}e^{xz}
\sum_{j=0}^k(1-e^x)^j\frac{A_{k-j}(e^x)}{(k-j)!}\frac{z^j}{j!}.
\end{equation}
\end{lemma}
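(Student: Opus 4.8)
The plan is to show that both sides of \eqref{2.9} coincide with the single auxiliary function
\[
\Phi_k(x,z) := (-1)^k\frac{x^{k+1}}{k!}\frac{\partial^k}{\partial x^k}\left(\frac{e^{zx}}{e^x-1}\right),
\]
where the $k$-th derivative is taken with respect to $x$. Matching each side against $\Phi_k$ separates the routine bookkeeping (the left-hand side) from the step where the Eulerian polynomials genuinely enter (the right-hand side).

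First I would connect the left-hand side to $\Phi_k$. Dividing \eqref{1.2} by $x$ gives $\frac{e^{zx}}{e^x-1} = \frac{1}{x} + \sum_{m=0}^\infty \frac{B_{m+1}(z)}{(m+1)!}x^m$, since $B_0(z)=1$. Differentiating $k$ times term by term, the principal part contributes $\frac{\partial^k}{\partial x^k}x^{-1} = (-1)^k k!\,x^{-k-1}$, while the analytic part contributes $\sum_{m=0}^\infty \frac{B_{m+k+1}(z)}{(m+k+1)\,m!}x^m$ after simplifying the falling factorial $\frac{(m+k)!}{(m+k+1)!\,m!} = \frac{1}{(m+k+1)m!}$. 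Multiplying through by $(-1)^k x^{k+1}/k!$ turns the principal-part term into the constant $1$ and the analytic term into $(-1)^k\frac{x^{k+1}}{k!}\sum_m \frac{B_{m+k+1}(z)}{m+k+1}\frac{x^m}{m!}$; comparing with \eqref{2.1} shows $\Phi_k = F_k$. This part is purely computational.

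The main work is to show that the right-hand side of \eqref{2.9} also equals $\Phi_k$. Writing $y=e^x$ and using \eqref{2.8} in the form $A_{k-j}(y) = (1-y)^{k-j+1}\sum_{m\geq 0}m^{k-j}y^m$, the inner sum collapses: factoring out $(1-y)^{k+1}$ and applying the binomial identity $\sum_{j=0}^k\binom{k}{j}z^j m^{k-j} = (m+z)^k$ yields
\[
\sum_{j=0}^k(1-e^x)^j\frac{A_{k-j}(e^x)}{(k-j)!}\frac{z^j}{j!}
= \frac{(1-e^x)^{k+1}}{k!}\sum_{m=0}^\infty (m+z)^k e^{mx}.
\]
The factor $(1-e^x)^{k+1}$ cancels against $\bigl(\tfrac{x}{e^x-1}\bigr)^{k+1}$ up to the sign $(-1)^{k+1}$, so the right-hand side of \eqref{2.9} becomes $(-1)^{k+1}\frac{x^{k+1}}{k!}e^{xz}\sum_{m\geq 0}(m+z)^k e^{mx}$. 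Finally I would recognize $e^{xz}\sum_{m\geq 0}(m+z)^k e^{mx} = \sum_{m\geq 0}(m+z)^k e^{(m+z)x} = \frac{\partial^k}{\partial x^k}\sum_{m\geq 0}e^{(m+z)x} = \frac{\partial^k}{\partial x^k}\frac{e^{zx}}{1-e^x}$, which equals $-\frac{\partial^k}{\partial x^k}\frac{e^{zx}}{e^x-1}$; the extra minus sign combines with $(-1)^{k+1}$ to produce exactly $\Phi_k$.

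The step requiring care is the use of \eqref{2.8}: the series $\sum_{m\geq 0}m^{k-j}e^{mx}$ and $\sum_{m\geq 0}(m+z)^k e^{mx}$ converge only for $\operatorname{Re}(x)<0$, whereas \eqref{2.1} is a power series valid near $x=0$. I would therefore carry out the manipulations of the previous paragraph in the half-plane $\operatorname{Re}(x)<0$, where $|e^x|<1$ and every series converges, identifying $A_{k-j}(e^x)/(1-e^x)^{k-j+1}$ with its series. Since both $F_k(x,z)$ and the closed form on the right of \eqref{2.9} are meromorphic in $x$, the identity established on this half-plane extends to their common domain of analyticity by the identity theorem. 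I expect this analytic-continuation bookkeeping, rather than any single algebraic manipulation, to be the only genuinely delicate point.
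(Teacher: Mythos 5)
Your proof is correct, and it follows the same basic strategy as the paper — differentiate the Bernoulli generating function $k$ times to tie $F_k$ to $\frac{\partial^k}{\partial x^k}\frac{e^{zx}}{e^x-1}$, then convert the resulting Lambert-type series into Eulerian polynomials via \eqref{2.8} — but mirrored in a way that produces a genuinely cleaner argument at one point. The paper expands $\frac{e^{zx}}{e^x-1}=\sum_{m\geq 1}e^{(z-m)x}$, a series in $e^{-x}$ valid for $\operatorname{Re}(x)>0$, and therefore applies \eqref{2.8} at $y=e^{-x}$; this forces it to invoke the self-reciprocal (palindromic) property $A_n(1/y)=y^{-(n+1)}A_n(y)$ to flip $A_{k-j}(e^{-x})$ into $A_{k-j}(e^x)$, and to treat the $j=k$ term separately by a geometric series. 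You instead expand $\frac{e^{zx}}{1-e^x}=\sum_{m\geq 0}e^{(m+z)x}$ in the half-plane $\operatorname{Re}(x)<0$ and apply \eqref{2.8} directly at $y=e^x$, so palindromy never enters and all values of $j$ (including $j=k$, granting the convention $0^0=1$ already implicit in \eqref{2.8}) are handled uniformly by the binomial collapse $\sum_j\binom{k}{j}z^jm^{k-j}=(m+z)^k$. Your intermediate representation $F_k(x,z)=\frac{(-x)^{k+1}}{k!}\sum_{m\geq 0}(m+z)^ke^{(m+z)x}$ is precisely the identity \eqref{5.d} that the paper only derives later via the Lerch transcendent, and is related to the paper's \eqref{2.11} by the symmetry \eqref{2.1a}. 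The other thing your write-up buys is rigor about domains: the paper silently manipulates series that converge only for $\operatorname{Re}(x)>0$ and never mentions continuation back to the disk $|x|<2\pi$, whereas you state the half-plane restriction and close the gap with the identity theorem (one could phrase this slightly more precisely: both sides of \eqref{2.9} are analytic on the connected domain $|x|<2\pi$, and agree on the nonempty open subset where $\operatorname{Re}(x)<0$, which suffices). So the proposal stands as a valid, marginally more self-contained proof of Lemma~\ref{lem:4}.
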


\begin{proof}
Applying a variant of a method used in \cite{Ei}, we find with \eqref{1.2},
\begin{align}
\sum_{m=1}^\infty e^{(z-m)x} &= e^{zx}\frac{e^{-x}}{1-e^{-x}}
= \frac{e^{zx}}{e^x-1} \label{2.10} \\
&= \frac{1}{x}+\frac{1}{x}\bigg(\sum_{m=0}^\infty B_m(z)\frac{x^m}{m!}-1\bigg)\nonumber\\
&= \frac{1}{x}+\sum_{m=0}^\infty\frac{B_{m+1}(z)}{m+1}\cdot\frac{x^m}{m!}.\nonumber
\end{align}
Note that the expressions in \eqref{2.10} have simple poles at $x=0$. Taking
the $k$th derivative with respect to $x$ of both sides of \eqref{2.10}, we get
\[
\sum_{m=1}^\infty(z-m)^k e^{(z-m)x} = (-1)^k\frac{k!}{x^{k+1}}
+\sum_{m=0}^\infty\frac{B_{m+k+1}(z)}{m+k+1}\cdot\frac{x^m}{m!}.
\]
Comparing this with \eqref{2.1}, we immediately see that
\begin{equation}\label{2.11}
F_k(x,z) = \frac{x^{k+1}}{k!}\sum_{m=1}^\infty(m-z)^k e^{(z-m)x}.
\end{equation}
Expanding $(m-z)^k$ as a binomial sum and changing the order of summation, we
get
\begin{equation}\label{2.12}
F_k(x,z) = x^{k+1}e^{zx}\sum_{j=0}^k\frac{(-z)^j}{j!(k-j)!}
\sum_{m=1}^\infty m^{k-j}(e^{-x})^m.
\end{equation}
Now by \eqref{2.8} and the palindromic property of the polynomials $A_k(y)$ we
have for $0\leq j\leq k-1$,
\begin{align*}
\sum_{m=1}^\infty m^{k-j}(e^{-x})^m &= \frac{A_{k-j}(e^{-x})}{(1-e^{-x})^{k-j+1}}
= \frac{e^{-x(k-j+1)}A_{k-j}(e^x)}{(1-e^{-x})^{k-j+1}} \\
& = \frac{(e^x-1)^j}{(e^x-1)^{k+1}}A_{k-j}(e^x),
\end{align*}
while for $j=k$ we use the first line of \eqref{2.10} with $z=0$, obtaining
\[
\sum_{m=1}^\infty(e^{-x})^m = \frac{1}{e^x-1}
= \frac{(e^x-1)^k}{(e^x-1)^{k+1}}A_0(e^x).
\]
If we combine these last two identities with \eqref{2.12}, we immediately get
\eqref{2.9}.
\end{proof}

\noindent
{\bf Remarks.} (1) An important special function, the {\it polylogarithm\/},
is defined by
\[
Li_s(z) = \sum_{n=1}^\infty\frac{z^n}{n^s},
\]
which for a fixed $s\in{\mathbb C}$ defines an analytic function of $z$ for
$|z|<1$; see, e.g., \cite[25.12.10]{DLMF}. Comparing this with \eqref{2.8}, 
we get
\[
A_k(y) = (1-y)^{k+1}Li_{-k}(y),
\]
and if we set $y=e^x$ and replace $k$ by $k-j$, we see that \eqref{2.9}
simplifies to
\begin{equation}\label{2.13}
F_k(x,z)=(-x)^{k+1}e^{xz}\sum_{j=0}^k\frac{Li_{j-k}(e^x)}{(k-j)!}\frac{z^j}{j!}.
\end{equation}
Note that the terms $e^x-1$ in \eqref{2.9} have disappeared.

(2) For an alternative approach to Lemma~\ref{lem:4}, see Part~4 of Section~5.

\section{Proof of Theorem~\ref{thm:1}}

By Lemma~\ref{lem:3} we need to determine the coefficient of $x^{(k+1)n-1}$ in
$F_k(x,z)^n$. To do so, we first change the order of summation in \eqref{2.9}
and obtain
\begin{equation}\label{3.1}
F_k(x,z)=\bigg(\frac{x}{e^x-1}\bigg)^{k+1}e^{xz}(z(1-e^x))^k
\sum_{j=0}^k\frac{A_j(e^x)}{(k-j)!}\cdot\frac{(z(1-e^x))^{-j}}{j!}.
\end{equation}
Taking the $n$th power of the sum in \eqref{3.1} gives
\[
\sum_{\nu\geq 0}(z(1-e^x))^{-\nu}
\sum_{\substack{j_1,\ldots,j_n\geq 0\\j_1+\dots+j_n=\nu}}
\frac{A_{j_1}(e^x)}{(k-j_1)!j_1!}\cdots\frac{A_{j_n}(e^x)}{(k-j_n)!j_n!},
\]
and the $n$th power of \eqref{3.1} then becomes
\begin{align}
F_k(x,z)^n &= \bigg(\frac{x}{e^x-1}\bigg)^{n(k+1)}\frac{e^{nxz}}{(k!)^n}
\sum_{\nu\geq 0}(z(1-e^x))^{nk-\nu} \label{3.2}\\
&\quad\times\sum_{\substack{j_1,\ldots,j_n\geq 0\\j_1+\dots+j_n=\nu}}
\binom{k}{j_1}A_{j_1}(e^x)\cdots\binom{k}{j_n}A_{j_n}(e^x).\nonumber
\end{align}
Now let $S_{k,\nu}^{(n)}(e^x)$ be the multiple sum in \eqref{3.2}. It is not
difficult to compute the first few of these sums:
\begin{align*}
S_{k,0}^{(n)}(e^x) &= 1,\\
S_{k,1}^{(n)}(e^x) &= \frac{nk}{1!}e^x,\\
S_{k,2}^{(n)}(e^x) &= \frac{nk}{2!}e^x\left[(k-1)+(kn-1)e^x\right],\\
S_{k,3}^{(n)}(e^x) &= \frac{nk}{3!}e^x\left[(k-1)(k-2)+(k-1)(3kn+k-8)e^x
+(kn-1)(kn-2)e^{2x}\right].
\end{align*}
These polynomials will be further investigated in Section~4.

\begin{proof}[Proof of Theorem~\ref{thm:1}]
The identity \eqref{1.4} follows from Lemma~\ref{lem:3} and \eqref{2.1a}, with
$n$ replaced by $n+1$.

For the remainder of the proof we rewrite \eqref{3.2} as
\begin{equation}\label{3.4}
F_k(x,z)^n = \bigg(\frac{x}{e^x-1}\bigg)^{n(k+1)}\frac{e^{nxz}}{(k!)^n}
\sum_{\nu=0}^{nk}(z(1-e^x))^{nk-\nu}S_{k,\nu}^{(n)}(e^x).
\end{equation}
We first consider the sum over $\nu$. For $\nu=0$, the corresponding summand
is simply
\begin{equation}\label{3.5}
z^{nk}\left(1-nke^x+\dots+(-1)^{nk}e^{nkx}\right),
\end{equation}
while for $1\leq \nu\leq nk$, the summand is
\begin{equation}\label{3.6}
z^{n(k+1)}(1-e^x)^{nk-\nu}S_{k,\nu}^{(n)}(e^x)
= z^{n(k+1)}\sum_{j=1}^{nk}d_j^{(\nu)}e^{jx},
\end{equation}
for certain coefficients $d_j^{(\nu)}$. Altogether, this and \eqref{3.5},
combined with \eqref{3.4}, gives
\begin{align}
F_k(x,z)^n &= \bigg(\frac{x}{e^x-1}\bigg)^{n(k+1)}\frac{e^{nxz}}{(k!)^n}
\sum_{\nu=0}^{nk}z^{nk-\nu}\sum_{j=1}^{nk}d_j^{(\nu)}e^{jx}\label{3.7}\\
&= \frac{1}{(k!)^n}\sum_{\nu=0}^{nk}z^\nu\sum_{j=0}^{nk}d_j^{(nk-\nu)}
\left[\bigg(\frac{x}{e^x-1}\bigg)^{n(k+1)}e^{(nz+j)x}\right],\nonumber
\end{align}
where $d_0^{(0)}=1$ and $d_0^{(\nu)}=0$ for $1\leq\nu\leq nk$.

At this point we use the Bernoulli polynomials of order $r$,
defined by the generating function
\begin{equation}\label{3.8}
\bigg(\frac{x}{e^x-1}\bigg)^re^{zx} 
= \sum_{m=0}^\infty B_m^{(r)}(z)\frac{x^m}{m!},\qquad |x|<2\pi;
\end{equation}
see, e.g., \cite[p.~127]{MT}. Usually $r$ is a positive integer, but it can 
also be considered as a variable. In particular, comparing \eqref{3.8} with 
\eqref{1.2} shows that $B_m^{(1)}(z)=B_m(z)$.

The right-most term of \eqref{3.7} now leads to
\begin{equation}\label{3.9}
[x^{n(k+1)-1}]F_k(x,z)^n
= \frac{1}{(k!)^n(n(k+1)-1)!}\sum_{\nu=0}^{nk}z^\nu\sum_{j=0}^{nk}d_j^{(nk-\nu)}
B_{n(k+1)-1}^{(n(k+1))}(nz+j).
\end{equation}
Using \eqref{2.2} and the fact that 
\begin{equation}\label{3.10}
B_{m-1}^{(m)}(y)=(y-1)(y-2)\cdots(y-m+1)
\end{equation}
(see, e.g., \cite[p.~130]{MT}), we get
\begin{equation}\label{3.11}
S_{n-1,k}(z) 
= \frac{1}{k!(n(k+1)-1)!}\sum_{\nu=0}^{nk}z^\nu\sum_{j=0}^{nk}d_j^{(nk-\nu)}
\prod_{r=1}^{n(k+1)-1}(nz+j-r).
\end{equation}
The product on the right is always divisible by $z$ when $j\geq 1$. When $j=0$,
then $d_j^{(nk-\nu)}=0$ for all $\nu < nk$, while for $\nu=nk$ we have the 
factor $z^{nk}$ in the corresponding summand on the right of \eqref{3.11}. 
Altogether, $S_{n-1,k}(z)$ is therefore divisible by $z$,
and by \eqref{1.4} is also divisible by $z-1$.

Finally, we consider again the product on the right-hand side of \eqref{3.11},
for $j=0,1,\ldots,nk$, and display it as follows:
\begin{align*}
j=0:\quad &(nz-1)(nz-2)\cdots(nz-n(k+1)+1),\\
j=1:\quad &nz(nz-1)\cdots(nz-n(k+1)+2),\\
&\vdots \\
j=nk:\quad &(nz+nk-1)(nz+nk-2)\cdots(nz-n+1).
\end{align*}
We now see that each of the factors $nz-1$, $nz-2$, \ldots, $nz-n+1$ occurs in
all $nk+1$ products. This means that, by \eqref{3.11}, the polynomial 
$S_{n-1,k}(z)$ is divisible by the product of these $n-1$ terms. Replacing,
finally, $n$ by $n+1$, this completes the proof of Theorem~1.
\end{proof}

\section{Some further results}

We return to the polynomials $S_{k,\nu}^{(n)}(y)$, defined as part of the 
identity \eqref{3.2} by
\begin{equation}\label{4.a}
S_{k,\nu}^{(n)}(y) = \sum_{\substack{j_1,\ldots,j_n\geq 0\\j_1+\dots+j_n=\nu}}
\binom{k}{j_1}A_{j_1}(y)\cdots\binom{k}{j_n}A_{j_n}(y).
\end{equation}

Some of the coefficients of these polynomials are relatively easy to
determine.

\begin{lemma}\label{lem:5}
For $\nu\geq 1$ we have
\[
S_{k,\nu}^{(n)}(y) = \sum_{j=1}^\nu c_jy^j,
\]
where
\[
c_1 = n\binom{k}{\nu},\quad
c_2 = \binom{n}{2}\left[\binom{2k}{\nu}-2\binom{k}{\nu}\right]
+n\left(2^\nu-\nu-1\right)\binom{k}{\nu},\quad
c_\nu = \binom{nk}{\nu}.
\]
\end{lemma}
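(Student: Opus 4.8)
The plan is to encode the multiple sum \eqref{4.a} as a single coefficient extraction. Setting
\[
\Phi_k(t,y)=\sum_{j=0}^k\binom{k}{j}A_j(y)\,t^j,
\]
the definition of $S_{k,\nu}^{(n)}(y)$ as a sum over compositions $j_1+\dots+j_n=\nu$ shows at once that $S_{k,\nu}^{(n)}(y)=[t^\nu]\,\Phi_k(t,y)^n$. I would then record the elementary facts about the Eulerian polynomials that drive everything: $A_0(y)=1$, while for $j\geq 1$ the polynomial $A_j(y)$ has degree $j$ with leading coefficient $1$ and lowest term $A(j,1)\,y=y$ (so $A_j(0)=0$); moreover $A(j,2)=2^j-j-1$, which follows from the closed form $A(j,m)=\sum_{i\geq 0}(-1)^i\binom{j+1}{i}(m-i)^j$ evaluated at $m=2$. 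Since $\nu\geq 1$, every composition has at least one $j_i\geq 1$, so each summand vanishes at $y=0$; this gives $c_0=0$ and justifies starting the sum at $j=1$, while $\deg A_{j_i}=j_i$ forces $\deg_y S_{k,\nu}^{(n)}=\nu$, so the top coefficient is indeed $c_\nu$.

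The coefficients $c_\nu$ and $c_1$ come out immediately. For $c_\nu$ I take the leading term $y^{j_i}$ from each factor $A_{j_i}$, whence $[y^\nu]\prod_i\binom{k}{j_i}A_{j_i}(y)=\prod_i\binom{k}{j_i}$; summing over $j_1+\dots+j_n=\nu$ gives $[t^\nu]\bigl(\sum_j\binom{k}{j}t^j\bigr)^n=[t^\nu](1+t)^{nk}=\binom{nk}{\nu}$ by the binomial theorem. For $c_1$ the key observation is that a term linear in $y$ can only arise when exactly one index $j_i$ is nonzero, forcing that index to equal $\nu$; choosing its position in $n$ ways and using $[y]A_\nu(y)=A(\nu,1)=1$ yields $c_1=n\binom{k}{\nu}$.

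The real work is $c_2$, and this is the step I expect to be the main obstacle because of the combinatorial bookkeeping. A $y^2$ term in a given summand $\prod_i\binom{k}{j_i}A_{j_i}(y)$ can only be produced when the set $S=\{i:j_i\geq 1\}$ has one or two elements, since each nonconstant factor contributes at least one power of $y$. The case $|S|=1$ forces the single nonzero index to equal $\nu$ and contributes $n\binom{k}{\nu}\,[y^2]A_\nu(y)=n\binom{k}{\nu}A(\nu,2)$. The case $|S|=2$ requires taking the lowest term $y$ (coefficient $1$) from each of the two nonconstant factors $A_p,A_q$ with $p+q=\nu$ and $p,q\geq 1$; choosing the two positions in $\binom{n}{2}$ ways, this contributes $\binom{n}{2}\sum_{p+q=\nu,\,p,q\geq1}\binom{k}{p}\binom{k}{q}$. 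I would then simplify the inner convolution by Vandermonde, $\sum_{p+q=\nu}\binom{k}{p}\binom{k}{q}=\binom{2k}{\nu}$, subtracting the two boundary terms $p=0$ and $q=0$ (each equal to $\binom{k}{\nu}$) to obtain $\binom{2k}{\nu}-2\binom{k}{\nu}$. Substituting $A(\nu,2)=2^\nu-\nu-1$ into the $|S|=1$ term and adding the two contributions reproduces exactly the claimed expression for $c_2$. The only points needing care are keeping the ordered/unordered position counts consistent (the factor $\binom{n}{2}$ against the ordered inner sum over $(p,q)$) and correctly excising the two boundary terms from the Vandermonde convolution.
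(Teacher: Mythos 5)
Your proof is correct and takes essentially the same approach as the paper: both classify the compositions by the number of nonzero parts $j_i$, handle the two-nonzero-part case via the Chu--Vandermonde convolution with the boundary terms removed, and use the Eulerian number evaluation $A(\nu,2)=2^\nu-\nu-1$ for the one-nonzero-part case. Your generating-function framing, including computing the leading coefficient as $[t^\nu](1+t)^{nk}=\binom{nk}{\nu}$, is just a restatement of the generalized Vandermonde identity that the paper cites, so there is no substantive difference.
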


\begin{proof}
By the known properties of the Eulerian polynomials, the lowest power of $y$ in
$A_k(y)$ for all $k\geq 1$ is $y^1$, and the coefficient is always 1. Hence 
we get a 
contribution to the coefficient of $y$ in $S_{k,\nu}^{(n)}(y)$ if and only if
all but one of the summation indices $j_i$ are 0, and one has to be $\nu$.
Since there are $n$ such cases, the coefficient of $y$ is $n\binom{k}{\nu}$,
as claimed.

Next, to determine the coefficient $c_2$, we need to consider two possibilities
for the defining sum \eqref{4.a}. First, we assume that all but two of the 
$j_i$ are 0, say $j_3=\cdots=j_n=0$, while $j_1\geq 1$ and $j_2\geq 1$. Then
$j_1+j_2=\nu$, and thus $1\leq j_1,j_2\leq \nu-1$. Also, the coefficients of
$y$ in $A_{j_i}(y)$ are 1. Hence with this assumption the sum becomes
\[
\sum_{r=1}^{\nu-1}\binom{k}{r}\binom{k}{\nu-r}=\binom{2k}{\nu}-2\binom{k}{\nu},
\]
where we have used the Chu-Vandermonde convolution; see, e.g., \cite[p.~8]{Ri}.
Since there are $\binom{n}{2}$ ways of selecting two nonzero $j_i$, the total
contribution is
\begin{equation}\label{4.b}
\binom{n}{2}\left[\binom{2k}{\nu}-2\binom{k}{\nu}\right].
\end{equation}

Second, we assume that all but one $j_i$ are 0. In this case the contribution
is $n\binom{k}{\nu}$ times the coefficient of $y^2$ in $A_\nu(y)$, namely
$A(\nu,2)=2^\nu-\nu-1$, where this evaluation of the Eulerian number can be
found, e.g., in \cite[p.~243]{Co}. This, combined with \eqref{4.b}, gives
the coefficient $c_2$.

Finally, since all $A_j(y)$ have degree $j$ and leading coefficient
1, and since $j_1+\cdots+j_n=\nu$, the polynomial $S_{k,\nu}^{(n)}(y)$ has
degree $\nu$ and leading coefficient
\[
\sum_{\substack{j_1,\ldots,j_n\geq 0\\j_1+\dots+j_n=\nu}}
\binom{k}{j_1}\cdots\binom{k}{j_n} = \binom{nk}{\nu},
\]
where we have used the generalized Vandermonde identity. This 
completes the proof of the lemma.
\end{proof}

We note that Lemma~\ref{lem:5} is consistent with the four examples given after
\eqref{3.2}. Another easy property of the polynomials $S_{k,\nu}^{(n)}(y)$
will be required in the proof of the next theorem.

\begin{lemma}\label{lem:7}
For positive integers $n$ and $k$ we have
\begin{equation}\label{4.1}
S_{k,nk}^{(n)}(y) = A_k(y)^n\qquad\hbox{and}\qquad y^n\mid S_{k,nk}^{(n)}(y),
\end{equation}
where $A_k(y)$ is the $k$th Eulerian polynomial defined by \eqref{2.6}.
\end{lemma}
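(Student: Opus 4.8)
The plan is to exploit the fact that the binomial coefficient $\binom{k}{j_i}$ appearing in \eqref{4.a} vanishes unless $0\le j_i\le k$, so that the multiple sum defining $S_{k,nk}^{(n)}(y)$ effectively runs only over compositions with $0\le j_i\le k$ for each $i$. First I would observe that, under these constraints, the condition $j_1+\cdots+j_n=nk$ forces every index to attain its maximal value $k$: each summand is at most $k$ and there are $n$ of them, so their sum is at most $nk$, with equality precisely when $j_1=\cdots=j_n=k$. Hence the multiple sum collapses to the single admissible term, giving
\[
S_{k,nk}^{(n)}(y)=\prod_{i=1}^n\binom{k}{k}A_k(y)=A_k(y)^n,
\]
which is the first assertion in \eqref{4.1}.

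For the divisibility statement I would invoke the elementary property of the Eulerian polynomials already used in the proof of Lemma~\ref{lem:5}: for every $k\ge 1$ the lowest power of $y$ occurring in $A_k(y)$ is $y^1$, that is, $A(k,0)=0$ (and $A(k,1)=1$). Equivalently, $y\mid A_k(y)$. Raising to the $n$th power then yields $y^n\mid A_k(y)^n$, and by the identity just established this is exactly $y^n\mid S_{k,nk}^{(n)}(y)$, completing the proof.

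Since both parts follow directly from a boundary consideration and a single known vanishing of an Eulerian number, I expect no substantial obstacle here. The only point requiring genuine care is the justification that the box constraints $0\le j_i\le k$ together with $j_1+\cdots+j_n=nk$ leave a unique composition. This is precisely where the choice $\nu=nk$, the maximal value of $\nu$ appearing in \eqref{3.4}, is essential, and it is what makes this extreme case far simpler than the general polynomials $S_{k,\nu}^{(n)}(y)$ treated in Lemma~\ref{lem:5}.
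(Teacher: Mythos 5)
Your proof is correct and follows essentially the same route as the paper: the constraint $\binom{k}{j_i}=0$ for $j_i>k$ forces the single surviving term $j_1=\cdots=j_n=k$, giving $S_{k,nk}^{(n)}(y)=A_k(y)^n$, and then $y\mid A_k(y)$ for $k\geq 1$ yields $y^n\mid A_k(y)^n$. The only difference is that you spell out the boundary argument that the paper leaves implicit, which is a fine (indeed slightly more careful) presentation of the same proof.
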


\begin{proof}
For $\nu=nk$, the only summand in the defining multiple sum for 
$S_{k,\nu}^{(n)}(y)$ that does not vanish corresponds to $j_1=\cdots j_n=k$, 
which implies the first part of \eqref{4.1}. The second part follows from the
fact that $y\mid A_k(y)$ for all $k\geq 1$.
\end{proof}

To motivate the following result, we note that Table~1 seems to indicate that 
the polynomial $S_{n,k}(z)$, defined in \eqref{1.3}, is divisible by 
$z^2(z-1)^2$ when $k=2$ and $n$ is even. This is in fact true in general.

\begin{theorem}\label{thm:6}
If $n, k$ are positive even integers, then $S_{n,k}(z)$ is divisible by
$z^2(z-1)^2$.
\end{theorem}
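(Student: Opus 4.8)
The plan is to reduce the claim to a parity statement about a single power-series coefficient. First, since $n$ and $k$ are even, the exponent $(k+1)(n+1)-1$ in the reflection identity \eqref{1.4} is even, so $S_{n,k}(1-z)=S_{n,k}(z)$. Consequently $z^2\mid S_{n,k}(z)$ if and only if $(z-1)^2\mid S_{n,k}(z)$, and it suffices to prove $z^2\mid S_{n,k}(z)$. We already know from Theorem~\ref{thm:1} that $z\mid S_{n,k}(z)$, so what remains is to show that the coefficient of $z^1$ in $S_{n,k}(z)$ vanishes.

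To isolate that coefficient, I would use Lemma~\ref{lem:3} with $n$ replaced by $n+1$, which gives $S_{n,k}(z)=k!^{n}[x^{(k+1)(n+1)-1}]F_k(x,z)^{n+1}$. Writing $F_k(x,z)=F_k(x,0)+z\,\Phi(x)+O(z^2)$ with $\Phi(x):=[z^1]F_k(x,z)$, the linear-in-$z$ term of the $(n+1)$st power is $(n+1)F_k(x,0)^n\Phi(x)$, so
\[
[z^1]S_{n,k}(z)=(n+1)\,k!^{n}\,[x^{(k+1)(n+1)-1}]\bigl(F_k(x,0)^n\,\Phi(x)\bigr).
\]
Thus it is enough to show this single coefficient is zero.

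The heart of the argument is a parity analysis in $x$. Using Lemma~\ref{lem:4} I would show that $F_k(x,0)$ is an even function of $x$ while $\Phi(x)$ is odd. Both facts rest on two identities: the substitution $x\mapsto-x$ sends $g(x):=x/(e^x-1)$ to $xe^x/(e^x-1)$, equivalently $g(-x)=g(x)+x$ with $g(-x)^{k+1}e^{-(k+1)x}=g(x)^{k+1}$; and the reciprocal property of Eulerian polynomials $A_m(1/y)=y^{-(m+1)}A_m(y)$, which with $y=e^x$ gives $A_m(e^{-x})=e^{-(m+1)x}A_m(e^x)$. Evaluating Lemma~\ref{lem:4} at $z=0$ leaves only the $j=0$ term, $F_k(x,0)=g(x)^{k+1}A_k(e^x)/k!$, and the two identities combine to give $F_k(-x,0)=F_k(x,0)$. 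Extracting the $z^1$ term of Lemma~\ref{lem:4} gives $\Phi(x)=g(x)^{k+1}\bigl[xA_k(e^x)/k!+(1-e^x)A_{k-1}(e^x)/(k-1)!\bigr]$ (here $k\geq 2$, so $A_{k-1}$ still enjoys the reciprocal property), and the same two identities yield $\Phi(-x)=-\Phi(x)$. Hence $F_k(x,0)^n\Phi(x)$ is odd in $x$. Finally, because $n+1$ and $k+1$ are both odd, the exponent $(k+1)(n+1)-1$ is even, and an odd power series has vanishing coefficients at all even powers of $x$; therefore $[z^1]S_{n,k}(z)=0$, which together with the reflection reduction completes the proof.

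I expect the main obstacle to be verifying the odd parity of $\Phi(x)$: this is the one place where the $(1-e^x)A_{k-1}(e^x)$ term must be tracked carefully through $x\mapsto-x$, using $1-e^{-x}=-e^{-x}(1-e^x)$ together with both reciprocal identities, in order to see that the two summands inside the bracket transform with the \emph{same} overall factor $-e^{-(k+1)x}$; once this is checked, the conclusion follows immediately from the even exponent.
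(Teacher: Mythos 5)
Your proof is correct, and it takes a genuinely different route from the paper's. The paper proves Theorem~\ref{thm:6} by returning to the explicit expansion \eqref{3.11}: only the $\nu=0$ block $T_{n,k}(z)$ of \eqref{4.2} can contribute to the coefficient of $z$, its coefficients $d_j^{(nk)}$ are (by Lemma~\ref{lem:7}) the coefficients of $A_k(y)^n=y^n\bigl(A_k(y)/y\bigr)^n$, and the palindromy of $\bigl(A_k(y)/y\bigr)^n$ lets one pair the summands $j$ and $m-j$ in \eqref{4.2a}; reversing the order of the factors in one product of each pair shows the two linear coefficients in \eqref{4.4} are negatives of each other (because the number of factors, $n(k+1)-2$, is odd), so the coefficient of $z$ cancels pairwise. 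You instead stay entirely at the generating-function level of Lemmas~\ref{lem:3} and~\ref{lem:4}: you extract $[z^1]F_k(x,z)^{n+1}=(n+1)F_k(x,0)^n\Phi(x)$ and prove a parity statement in $x$ --- $F_k(x,0)$ even, $\Phi(x)$ odd --- from the two identities $g(-x)=e^xg(x)$ and $A_m(e^{-x})=e^{-(m+1)x}A_m(e^x)$ (valid for $m\geq 1$, whence your $k\geq 2$ caveat, which the hypothesis $k$ even and positive supplies). I checked the computations: at $z=0$ only the $j=0$ term of \eqref{2.9} survives, giving $F_k(x,0)=g(x)^{k+1}A_k(e^x)/k!$, and your formula $\Phi(x)=g(x)^{k+1}\bigl[xA_k(e^x)/k!+(1-e^x)A_{k-1}(e^x)/(k-1)!\bigr]$ together with $1-e^{-x}=-e^{-x}(1-e^x)$ does yield $\Phi(-x)=-\Phi(x)$; since $(k+1)(n+1)-1$ is even exactly when $n$ and $k$ are both even, the odd series $F_k(x,0)^n\Phi(x)$ has vanishing coefficient there, and the reflection \eqref{1.4} transfers $z^2$ to $(z-1)^2$ just as in the paper. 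Both arguments ultimately rest on the self-reciprocity of the Eulerian polynomials, but yours is shorter and makes transparent where the parity of $n$ and $k$ enters (only through the parity of the target exponent), whereas the paper's combinatorial pairing has the advantage of producing the explicit expression \eqref{4.2a} that it then reuses in Theorem~\ref{thm:8} to evaluate the coefficient of $z$ in the cases where it does not vanish.
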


\begin{proof}
By \eqref{3.11} we only need to consider the summand for $\nu=0$, and we are
done if we can show that
\begin{equation}\label{4.2}
T_{n,k}(z) := \sum_{j=0}^{nk}d_j^{(nk)}\prod_{r=1}^{n(k+1)-1}(nz+j-r)
\end{equation}
is divisible by $z^2$ when $k$ is even and $n$ is odd (note the shift in $n$ in
\eqref{3.11}). By \eqref{3.6} with $\nu=nk$ and by \eqref{4.1} we have
$d_j^{(nk)}=0$ for $0\leq j\leq n-1$, while $a_j:=d_{n+j}^{(nk)}$, 
$j=0,1,\ldots,nk-n$ are the coefficients of $(A_k(y)/y)^n$, which is a 
polynomial in $y$ of degree $n(k-1)$. Since $A_k(y)/y$ is a self-reciprocal 
polynomial, then so is $(A_k(y)/y)^n$, and therefore 
$a_0, a_1, \ldots, a_{nk-n}$ is a palindromic
sequence, i.e., $a_j=a_{m-j}$, where for convenience we have set $m:=nk-n$. 
We can then rewrite \eqref{4.2} as
\begin{equation}\label{4.2a}
T_{n,k}(z) = \sum_{j=n}^{nk}a_{j-n}\prod_{r=1}^{n(k+1)-1}(nz+j-r) 
= \sum_{j=0}^m a_j\prod_{r=1}^{n(k+1)-1}(nz+n+j-r),
\end{equation}
which holds for any integers $n,k\geq 1$. Assuming now that $k$ is even and $n$
is odd, and using symmetry, we get 
\[
T_{n,k}(z) = \sum_{j=0}^{\lfloor m/2\rfloor} a_j
\bigg(\prod_{r=1}^{n(k+1)-1}(nz+n+j-r)+\prod_{r=1}^{n(k+1)-1}(nz+nk-j-r)\bigg),
\]
where in the second product $j$ was replaced by $nk-n-j$. We also note that
$m=n(k-1)$ is odd since $n$ is odd and $k$ is even; hence the original $m+1$
summands divide evenly into the pairs in the last equation above.

Now we switch the order of multiplication in the second product, replacing
$r$ by $nk+n-r$. Then we get
\begin{equation}\label{4.3}
T_{n,k}(z) = \sum_{j=0}^{\lfloor m/2\rfloor} a_j
\bigg(\prod_{r=1}^{n(k+1)-1}(nz+n+j-r)+\prod_{r=1}^{n(k+1)-1}(nz-(n+j-r))\bigg).
\end{equation}
For each $j$, $0\leq j\leq\lfloor m/2\rfloor$, there is an $r$ with 
$1\leq r\leq n(k+1)-1$, such that $r=n+j$. Hence the two products in \eqref{4.3}
are divisible by $nz$. Next we note that the coefficients of $nz$ in the two
products are 
\begin{equation}\label{4.4}
\prod_{\substack{r=1\\r\neq n+j}}^{n(k+1)-1}(n+j-r)\quad\hbox{and}\quad
\prod_{\substack{r=1\\r\neq n+j}}^{n(k+1)-1}(-(n+j-r)),
\end{equation}
respectively. Since $n(k+1)$ is odd, then so is the number of factors, 
$n(k+1)-2$, in both products \eqref{4.4}, which are therefore negatives of 
each other. Hence the coefficient of $nz$ in \eqref{4.3} is 0. Replacing
$n$ by $n+1$, this proves that $S_{n,k}(z)$ is divisible by $z^2$. 
Finally, divisibility by $(z-1)^2$ now follows from \eqref{1.4}.
\end{proof}

By refining the method of proof of Theorem~\ref{thm:6} one can, in principle,
determine the coefficient of $z$ in $S_{n,k}(z)$, as defined in \eqref{1.3}, 
in the case where one of $n$ and $k$ is odd. 

\begin{theorem}\label{thm:8}
Let $n$ and $k$ be positive integers, not both even. Then the coefficient of
$z$ in $S_{n,k}(z)$ is
\begin{equation}\label{4.5}
\frac{(-1)^{k(n+1)-1}(n+1)}{k!((k+1)(n+1)-1)!}
\sum_{j=0}^{(k-1)(n+1)}(-1)^ja_j^{(k,n+1)}(n+j)!(k(n+1)-1-j)!,
\end{equation}
where $a_j^{(k,n+1)}$ is the coefficient of $y^j$ in $(A_k(y)/y)^{n+1}$, with
$A_k(y)$ the $k$th Eulerian polynomial, defined in \eqref{2.6}.
\end{theorem}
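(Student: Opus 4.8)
The plan is to refine the proof of Theorem~\ref{thm:6}: rather than showing that the coefficient of $z$ in $S_{n,k}(z)$ vanishes, I would extract its exact value from \eqref{3.11} with $n$ replaced by $n+1$. Writing $M:=(n+1)(k+1)-1$, the coefficient of $z^1$ in $S_{n,k}(z)$ can only receive contributions from the summands $\nu=0$ and $\nu=1$, since for $\nu\geq 2$ the factor $z^\nu$ already carries the power $z^2$. First I would dispose of the $\nu=1$ term exactly as in Theorem~\ref{thm:6}: its contribution to the coefficient of $z^1$ is the constant term of $\sum_j d_j^{((n+1)k-1)}\prod_{r=1}^{M}\bigl((n+1)z+j-r\bigr)$, and since $d_0^{((n+1)k-1)}=0$ while every product with $1\leq j\leq (n+1)k\leq M$ has a vanishing factor at $z=0$, this term drops out. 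Thus the coefficient of $z$ comes entirely from the $\nu=0$ summand.

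For that summand I would invoke the identification established in the proof of Theorem~\ref{thm:6} (via Lemma~\ref{lem:7}): the coefficients $d_{(n+1)+i}^{((n+1)k)}$ are precisely the coefficients $a_i^{(k,n+1)}$ of $y^i$ in $(A_k(y)/y)^{n+1}$, with $d_j^{((n+1)k)}=0$ for $0\leq j\leq n$. Hence, setting $j=(n+1)+i$, the $\nu=0$ part of \eqref{3.11} reads, up to the prefactor $1/(k!\,M!)$,
\[
\sum_{i=0}^{(k-1)(n+1)} a_i^{(k,n+1)}\prod_{r=1}^{M}\bigl((n+1)z+(n+1+i)-r\bigr),
\]
where the upper limit is the degree $(n+1)(k-1)$ of $(A_k(y)/y)^{n+1}$. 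In each product exactly the factor indexed by $r=n+1+i$ equals $(n+1)z$, so the coefficient of $z$ in that product is $(n+1)$ times the value of the remaining factors at $z=0$, namely $(n+1)(-1)^{M-j}(j-1)!\,(M-j)!$ with $j=n+1+i$.

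It then remains to reindex and simplify. With $j=(n+1)+i$ one has $(j-1)!=(n+i)!$ and $M-j=k(n+1)-1-i$, so the coefficient of $z$ becomes
\[
\frac{(n+1)}{k!\,((k+1)(n+1)-1)!}\sum_{i=0}^{(k-1)(n+1)}(-1)^{k(n+1)-1-i}\,a_i^{(k,n+1)}(n+i)!\,(k(n+1)-1-i)!,
\]
and pulling out $(-1)^{k(n+1)-1}$ and using $(-1)^{-i}=(-1)^i$ yields exactly \eqref{4.5}. I expect the only real difficulty to be bookkeeping: keeping the index shift $n\mapsto n+1$ straight, tracking the sign $(-1)^{M-j}$ together with the two factorials, and confirming the summation range. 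The hypothesis that $n,k$ are not both even plays no role in this computation—it merely delimits the case left open by Theorem~\ref{thm:6}, and indeed one should check that in the excluded case \eqref{4.5} reduces to $0$, recovering the conclusion of Theorem~\ref{thm:6}.
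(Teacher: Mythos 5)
Your proposal is correct and takes essentially the same route as the paper's own proof: both isolate the $\nu=0$ summand of \eqref{3.11}, identify the coefficients $d_{n+1+i}^{((n+1)k)}$ with $a_i^{(k,n+1)}$ via Lemma~\ref{lem:7} (i.e., \eqref{4.2a}), and read off the coefficient of $z$ from the single factor $(n+1)z$ at $r=n+1+i$, with identical sign and factorial bookkeeping. Your explicit verification that the $\nu=1$ summand contributes nothing (and your closing remark that the ``not both even'' hypothesis is not actually used) only spells out details the paper leaves implicit.
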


\begin{proof}
By \eqref{3.11} and \eqref{4.2a}, the coefficient of $z$ in $S_{n-1,k}(z)$ is
the coefficient of $z$ in the expression
\begin{equation}\label{4.7}
\frac{1}{k!((k+1)n-1)!}
\sum_{j=0}^{(k-1)n}a_j^{(k,n)}\prod_{r=1}^{(k+1)n-1}(nz+n+j-r).
\end{equation}
Now, the coefficient of $z$ in the product in this expression is
\begin{align*}
n\prod_{r=1}^{n+j-1}(n+j-r)&\cdot\prod_{r=n+j+1}^{(k+1)n-1}(n+j-r)\\
&= n\prod_{r=1}^{n+j-1}r\cdot\prod_{r=1}^{kn-j-1}(-r)\\
&= (-1)^{kn-j-1}n(n+j-1)!(kn-j-1)!.
\end{align*}
Finally we combine this with \eqref{4.7}, and replace $n$ by $n+1$. This
immediately gives the expression \eqref{4.5}.
\end{proof}

For the cases $k=1$ and $k=2$ in Theorem~\ref{thm:8} we can actually obtain
explicit expressions.

\begin{corollary}\label{cor:9}
$(a)$ For any $n\geq 1$, the coefficient of $z$ in $S_{n,1}(z)$ is
\[
\frac{(-1)^n}{\binom{2n+1}{n}}.
\]
$(b)$ For any odd $n\geq 1$, the coefficient of $z$ in $S_{n,2}(z)$ is
\[
-\frac{(n+1)!^2(n+\frac{n+1}{2})!}{2(3n+2)!(\frac{n+1}{2})!}.
\]
\end{corollary}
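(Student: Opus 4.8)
The plan is to specialize Theorem~\ref{thm:8} to $k=1$ and $k=2$, where the Eulerian polynomials $A_1(y)=y$ and $A_2(y)=y^2+y$ make the coefficients $a_j^{(k,n+1)}$ of $(A_k(y)/y)^{n+1}$ completely explicit, and then to evaluate the resulting finite sum in \eqref{4.5}. Since $k=1$ is odd and $k=2$ is even, the hypothesis of Theorem~\ref{thm:8} that $n,k$ are not both even reduces to ``all $n$'' in part $(a)$ and to ``$n$ odd'' in part $(b)$, matching the statement. For part $(a)$ we have $A_1(y)/y=1$, hence $(A_1(y)/y)^{n+1}=1$, so that $a_0^{(1,n+1)}=1$ and $a_j^{(1,n+1)}=0$ for $j\ge 1$; the sum in \eqref{4.5} collapses to its $j=0$ term. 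Substituting $(-1)^{(n+1)-1}=(-1)^n$, $((k+1)(n+1)-1)!=(2n+1)!$ and $(n+0)!\,(n-0)!=(n!)^2$ reduces \eqref{4.5} to $(-1)^n(n+1)(n!)^2/(2n+1)!$, and since $(n+1)(n!)^2=(n+1)!\,n!$ this is exactly $(-1)^n/\binom{2n+1}{n}$. This is a routine simplification with no obstacle.

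For part $(b)$ we have $A_2(y)/y=1+y$, so $(A_2(y)/y)^{n+1}=(1+y)^{n+1}$ and $a_j^{(2,n+1)}=\binom{n+1}{j}$. Writing $m=(n+1)/2$, which is a positive integer because $n$ is odd, \eqref{4.5} reduces the claim to the evaluation
\[
\Sigma:=\sum_{j=0}^{n+1}(-1)^j\binom{n+1}{j}(n+j)!\,(2n+1-j)!
=\frac{(2m)!\,(2m-1)!\,(3m-1)!}{m!}.
\]
I would compute $\Sigma$ by a Beta integral. Using $(a)!\,(b)!=(a+b+1)!\int_0^1 t^a(1-t)^b\,dt$ with $a=2m+j-1$, $b=4m-j-1$ (so $a+b+1=6m-1$), interchanging the finite sum with the integral, and summing by the binomial theorem, gives
\[
\Sigma=(6m-1)!\int_0^1 t^{2m-1}(1-t)^{2m-1}(1-2t)^{2m}\,dt.
\]
The symmetric substitution $t=(1+s)/2$ turns this into $(6m-1)!\,4^{1-2m}\int_0^1 s^{2m}(1-s^2)^{2m-1}\,ds$, using that the integrand is even in $s$; then $w=s^2$ produces $\tfrac12 B(m+\tfrac12,2m)=\tfrac12\Gamma(m+\tfrac12)\Gamma(2m)/\Gamma(3m+\tfrac12)$. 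Evaluating with the Legendre duplication values $\Gamma(m+\tfrac12)=(2m)!\sqrt{\pi}/(4^m m!)$ and $\Gamma(3m+\tfrac12)=(6m)!\sqrt{\pi}/(4^{3m}(3m)!)$ collapses everything to $\Sigma=(2m)!\,(2m-1)!\,(3m-1)!/m!$. Inserting this into \eqref{4.5} and rewriting in terms of $n$ (with $2m=n+1$ and $3m-1=n+\tfrac{n+1}{2}$, and noting $(n+1)(n+1)!\,n!=(n+1)!^2$) yields the stated closed form.

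The main obstacle is the evaluation of $\Sigma$. It is tempting to observe that $\Sigma/\bigl((2m-1)!(4m-1)!\bigr)$ is the terminating Kummer-type series ${}_2F_1(-2m,2m;1-4m;-1)$ and to apply Kummer's theorem directly. However, the lower parameter $1-4m$ is a negative integer, so the Gamma-function form of Kummer's theorem does not apply, and it in fact produces an answer that is off by a factor of $2$: already at $m=1$ the true value of the series is $2/3$, whereas the naive Gamma evaluation gives $1/3$. For this reason I would avoid the hypergeometric route and rely on the Beta-integral evaluation above, where the symmetric substitution $t\mapsto(1+s)/2$ and the parity of the integrand do the essential work.
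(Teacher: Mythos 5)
Your proposal is correct, and part $(a)$ coincides with the paper's own argument: both collapse the sum in \eqref{4.5} to its $j=0$ term and simplify. For part $(b)$, however, you take a genuinely different route. The paper factors $n!^2$ out of the sum, rewrites it as
\[
\sum_{j=0}^{n+1}(-1)^j\binom{n+j}{j}\binom{2n+1-j}{n+1-j},
\]
and then quotes identity (3.36) from Gould's tables, which evaluates this to $\binom{n+(n+1)/2}{(n+1)/2}$ for odd $n$ (and to $0$ for even $n$, consistent with Theorem~\ref{thm:6}). You instead evaluate the equivalent factorial sum $\Sigma$ from first principles: the Beta representation $a!\,b!=(a+b+1)!\int_0^1 t^a(1-t)^b\,dt$, the binomial theorem under the integral sign (legitimate, since the sum is finite), the symmetric substitution $t=(1+s)/2$, and Legendre duplication. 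I checked the computation: the integral $(6m-1)!\int_0^1 t^{2m-1}(1-t)^{2m-1}(1-2t)^{2m}\,dt$, the reduction to $\tfrac12 B(m+\tfrac12,2m)$, and the final collapse to $(2m)!\,(2m-1)!\,(3m-1)!/m!$ are all correct (e.g.\ at $m=1$ both give $\Sigma=4$, matching the coefficient $-\tfrac1{30}$ of $z$ in $S_{1,2}(z)$ from Table~1), and the reassembly of constants reproduces the stated closed form. What each approach buys: the paper's proof is shorter but rests on an external table identity; yours is self-contained, and moreover the parity mechanism in your integral (the integrand $s^{n+1}(1-s^2)^n$ being odd in $s$ when $n$ is even) would transparently re-prove the vanishing for even $n$ as well, though you did not need it. Your closing caveat about the inapplicability of Kummer's theorem when the lower parameter $1-4m$ is a negative integer is a sound and worthwhile warning, not a gap.
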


\begin{proof}
(a) For $k=1$, the sum in \eqref{4.5} consists of a single summand, and since
$(A_1(y)/y)^{n+1}=1$ for all $n$, the expression \eqref{4.5} becomes
\[
\frac{(-1)^n(n+1)}{(2n+1)!}\cdot n!^2 = (-1)^n/\binom{2n+1}{n},
\]
as claimed.

(b) Since $A_2(y)/y=1+y$, we have
\[
a_j^{(2,n+1)} = \binom{n+1}{j},\qquad j=0, 1, \ldots, n+1,
\]
and the expression \eqref{4.5} with $k=2$ becomes
\begin{align}
&\frac{-(n+1)(n+1)!}{2(3n+2)!}
\sum_{j=0}^{n+1}(-1)^j\frac{(n+j)!(2n+1-j)!}{j!(n+1-j)!}\label{4.6} \\
&\qquad =\frac{-(n+1)(n+1)!n!^2}{2(3n+2)!}
\sum_{j=0}^{n+1}(-1)^j\binom{n+j}{j}\binom{2n+1-j}{n+1-j}.\nonumber
\end{align}
By identity (3.36) in \cite{Go}, this last binomial sum is 0 when $n$ is even
(consistent with Theorem~\ref{thm:6}), and is $\binom{n+(n+1)/2}{(n+1)/2}$
when $n$ is odd. Combining this with \eqref{4.6} gives the result of part (b).
\end{proof}

The coefficient of $z$ in $S_{n,1}(z)$ is obviously the reciprocal of an integer
for each $n\geq 1$. Given the form of the corresponding expression in 
Corollary~\ref{cor:9}(b), it is rather surprising that this should also hold
for the coefficients of $z$ in $S_{n,2}(z)$. In fact, we prove slightly more.

\begin{corollary}\label{cor:10}
For any integer $n\geq 0$, denote
\begin{equation}\label{4.11}
c_n:=\frac{(n+1)!^2(n+\frac{n+1}{2})!}{2(3n+2)!(\frac{n+1}{2})!},
\end{equation}
which is defined for all integers $n\geq 0$ if we interpret fractional 
factorials in terms of the gamma function. Then $c_n$ is the reciprocal of
an even integer for all $n\geq 0$.
\end{corollary}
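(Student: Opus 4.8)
The plan is to reduce everything to the single assertion that $\frac{1}{2c_n}$ is a positive integer: once this is known, $1/c_n = 2\cdot\frac{1}{2c_n}$ is automatically a positive \emph{even} integer, which is exactly what the corollary claims. First I would clear the half-integer factorials. Writing $n+\frac{n+1}{2} = \frac{3n+1}{2}$ and reading the factorials through the Gamma function, the quotient $(\frac{n+1}{2})!/(\frac{3n+1}{2})!$ is a reciprocal rising factorial with $n$ factors, namely $\prod_{i=1}^{n}(\frac{n+1}{2}+i)^{-1} = 2^n/\prod_{i=1}^n(n+1+2i)$. Combining this with $(3n+2)!/(n+1)! = \prod_{k=n+2}^{3n+2}k$ and cancelling from the full range $n+2,\dots,3n+2$ exactly the $n$ factors of parity opposite to $n$, I obtain the uniform closed form
\[
\frac{1}{2c_n} = \frac{2^n}{(n+1)!}\prod_{j=1}^{n+1}(n+2j),
\]
valid for every integer $n\geq 0$, with no case split needed at this stage. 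The product runs over the $n+1$ integers $n+2,n+4,\dots,3n+2$, all of the same parity as $n$.

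Now I would split according to the parity of $n$, because the product factors into a power of $2$ times an ordinary factorial ratio differently in the two cases. When $n=2s$ is even, every factor is even and $\prod_{j=1}^{2s+1}(2s+2j) = 2^{2s+1}(3s+1)!/s!$, yielding the clean evaluation
\[
\frac{1}{2c_n} = 2^{4s+1}\binom{3s+1}{s},
\]
which is manifestly a positive integer. This case is therefore immediate.

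The substance lies in the odd case $n = 2t-1$. Here the product is a product of odd numbers, $\prod_{j=1}^{2t}(2t-1+2j) = (6t-1)!!/(2t-1)!!$, and after converting the double factorials via $(2m-1)!! = (2m)!/(2^m m!)$ one reaches $\frac{1}{2c_n} = \frac12 M_t$, where $M_t := \frac{(6t)!\,t!}{(2t)!^2(3t)!}$. It then remains to prove that $M_t$ is a positive \emph{even} integer. Integrality at every prime $p$ I would get from Legendre's formula $v_p(m!) = (m-s_p(m))/(p-1)$: the $p$-adic valuation of $M_t$ equals $\sum_{\ell\geq1}h(t/p^\ell)$ with $h(x) = \lfloor 6x\rfloor + \lfloor x\rfloor - 2\lfloor 2x\rfloor - \lfloor 3x\rfloor$, and a routine check of $h$ on $[0,1)$ at its six breakpoints (the multiples of $1/6$) shows $h\geq 0$ everywhere, so each summand is nonnegative and $M_t\in\mathbb{Z}$. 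For the crucial factor of $2$, I would compute $v_2(M_t)$ directly: the linear terms $6t+t-2(2t)-3t$ cancel, and using the shift identities $s_2(2t)=s_2(t)$ and $s_2(6t)=s_2(3t)$ everything collapses to $v_2(M_t) = s_2(t)$, the number of ones in the binary expansion of $t$. Since $t\geq 1$ forces $s_2(t)\geq 1$, the integer $M_t$ is even, $\frac12 M_t\in\mathbb{Z}$, and the odd case is complete.

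The main obstacle I anticipate is exactly this last point: securing the extra factor of $2$ in the odd case. Integrality of $M_t$ is soft, following from nonnegativity of the single floor function $h$, but divisibility by $2$ is a genuinely $2$-adic statement. The pleasant feature of the approach is that Legendre's formula together with the binary shift property reduces it to the trivial fact $s_2(t)\geq 1$, thereby avoiding any delicate analysis of where the fractional parts $\{t/2^\ell\}$ fall.
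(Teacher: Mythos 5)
Your proposal is correct, and it takes a genuinely different route from the paper. The paper starts from the closed form \eqref{4.12} and Euler's duplication formula to write $c_n = 2^{-2n-2}\binom{3n/2+1}{n+1}^{-1}$; even $n$ is then immediate, while for odd $n$ the half-integer binomial coefficient is reduced via the Pascal recurrence to terms $\binom{1/2}{k}$, each shown to be a Catalan number times an explicit power of $2$. You instead clear the Gamma quotients directly to the uniform identity $\frac{1}{2c_n}=\frac{2^n}{(n+1)!}\prod_{j=1}^{n+1}(n+2j)$, which I verified, as well as your two specializations: $2^{4s+1}\binom{3s+1}{s}$ for $n=2s$, and $\tfrac{1}{2}M_t$ with $M_t=\frac{(6t)!\,t!}{(2t)!^2(3t)!}$ for $n=2t-1$. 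Your odd case then rests on Legendre's formula used twice: the floor-function form gives integrality of the Chebyshev--Landau-type ratio $M_t$, and the digit-sum form gives exactly $v_2(M_t)=s_2(t)\geq 1$ (the linear terms cancel and $s_2(2t)=s_2(t)$, $s_2(6t)=s_2(3t)$, as you say). Your approach is self-contained (no OEIS input, no generalized binomial coefficients) and yields sharper information --- the exact $2$-adic valuation of $1/c_n$ in the odd case, not mere evenness; the paper's approach is computationally lighter and surfaces the Catalan numbers, which foreshadow the combinatorial sequence $a_n$ (elementary arches) in the remark that follows the corollary. One point to make explicit when writing this up: concluding $h\geq 0$ everywhere from the check on $[0,1)$ uses the periodicity $h(x+1)=h(x)$, which holds precisely because the coefficients $6+1-2\cdot 2-3$ sum to zero --- the same cancellation you exploit in the $v_2$ computation.
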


Before proving this result, we give the first few terms, namely
\[
c_0=\frac{1}{4},\quad c_1=\frac{1}{30},\quad c_2=\frac{1}{256},\quad
c_3=\frac{1}{2310},\quad c_4=\frac{1}{21504}.
\]
The sequence of reciprocals 4, 30, 256, 2310, \ldots, is listed as sequence
A091527 in \cite{OEIS}, with the explicit expansion (in our notation)
\begin{equation}\label{4.12}
\frac{1}{c_{n-1}} 
= \frac{(3n)!\Gamma(\tfrac{n}{2}+1)}{n!^2\Gamma(\tfrac{3n}{2}+1)}.
\end{equation}

\begin{proof}[Proof of Corollary~\ref{cor:10}]
It is easy to verify that \eqref{4.12} is consistent with \eqref{4.11}, and
using Euler's duplication formula, $c_n$ can be simplified as
\[
c_n = 2^{-2n-2}\frac{\Gamma(\tfrac{n}{2}+1)\Gamma(n+2)}{\Gamma(\tfrac{3n}{2}+2)}
= 2^{-2n-2}\binom{\tfrac{3n}{2}+1}{n+1}^{-1}.
\]
For even $n$, the last term contains a binomial coefficient, which gives the
statement of the corollary in this case. For odd $n$, on the other hand, we are
dealing with a generalized binomial coefficient which in general is not an 
integer. We wish to show that for odd $n$ the expression
\[
d_n := 2^{2n+2}\binom{\tfrac{3n}{2}+1}{n+1}
\]
is an integer. We use the fact that binomial coefficients, including generalized
ones, satisfy the recurrence relation
\[
\binom{\tfrac{3n}{2}+1}{n+1}
= \binom{\tfrac{3n}{2}}{n+1} + \binom{\tfrac{3n}{2}}{n},
\]
so the binomial coefficient on the left can be reduced to a linear combination
of terms of the form $\binom{1/2}{k}$, with $k\leq n+1$. Hence we are done if
we can show that 
\[
2^{2n+2}\binom{1/2}{k}\in{\mathbb Z},\qquad k=0,1,\ldots, n+1.
\]
By definition, $\binom{1/2}{0}=1$, and for $k\geq 1$ we have
\begin{align*}
\binom{1/2}{k} 
&= \frac{(\tfrac{1}{2})(\tfrac{1}{2}-1)\cdots(\tfrac{1}{2}-k+1)}{k!}
=\frac{(-1)^{k-1}}{2^kk!}\cdot 1\cdot 3\cdots (2k-3)\\
&= \frac{(-1)^{k-1}}{2^kk!}\cdot\frac{(2k-2)!}{2^{k-1}k!}
= \frac{(-1)^{k-1}}{2^{2k-1}}\cdot\frac{1}{k}\binom{2k-2}{k-1}.
\end{align*}
Now $C_{k-1}=\frac{1}{k}\binom{2k-2}{k-1}$ is a Catalan number, which is an
integer. With this, we finally have
\[
2^{2n+2}\binom{1/2}{k} = (-1)^{k-1}2^{2(n+1-k)+1}C_{k-1}.
\]
This shows that $d_n$ is an even integer when $n$ is odd, and thus for all $n$.
\end{proof}

\noindent
{\bf Remarks.} (1) Corollary~\ref{cor:10} can actually be improved as follows.
We set
\[
a_n:=\frac{1}{2(3n+2)c_n}
=\frac{(3n+1)!(\tfrac{n+1}{2})!}{(n+1)!^2(n+\tfrac{n+1}{2})!}
=\frac{2^{2n}}{n+1}\binom{3n/2}{n},
\]
where the right-most equality is obtained as in the proof of 
Corollary~\ref{cor:10}. The generating function
\begin{equation}\label{4.13}
A(z) := \sum_{n=0}^\infty a_nz^n
\end{equation}
can then be computed as
\[
A(z)=\frac{1}{2z}\left(1-\cos(\tfrac{1}{3}\arcsin(6\sqrt{3}z))
+\tfrac{1}{\sqrt{3}}\sin(\tfrac{1}{3}\arcsin(6\sqrt{3}z))\right),
\]
and with some effort one can show that $A(z)$ satisfies the equation
\[
2z^2A(z)^3 - 3zA(z)^2 + A(z) = 1.
\]
Substituting $A(z)$ from \eqref{4.13} into this equation, expanding, and then
equating coefficients of $z^n$, we see that each $a_n$ can be written as a sum
of products of $a_k$ with $0\leq k\leq n-1$, with integer coefficients. This
implies that all the $a_n$ are integers, and thus the denominator of $c_n$ 
is not only even, but is in fact
divisible by $2(3n+2)$. The first few terms of the sequence $(a_n)$ are
\[
a_0=1,\quad a_1=3,\quad a_2=16,\quad a_3=105,\quad a_4=768.
\]
This sequence can be found as A085614 in \cite{OEIS}; according to this entry,
$a_n$ is the number of elementary arches of size $n+1$.

\medskip
(2) Since the coefficients of $z$ in $S_{n,1}(z)$ and $S_{n,2}(z)$ are 
reciprocals of integers, it would be interesting to know whether this is also
the case for $S_{n,3}(z)$. By direct computation using Theorem~\ref{thm:8}, we
find that the first few coefficients of $z$ in $S_{n,3}(z)$ are
\[
c_1^{(3)}=-\frac{1}{126},\quad c_2^{(3)}=-\frac{1}{1155},\quad
c_3^{(3)}=-\frac{1}{6930},\quad\hbox{and}\quad c_4^{(3)}=-\frac{10}{513513},
\]
which answers the question in the negative. Computing the first 30 terms and 
using a recurrence fitting algorithm, we found the recurrence relation
\[
12(4n+5)(4n+11)c_{n+2}^{(3)}-8(n+3)(2n+3)c_{n+1}^{(3)}-(n+2)(n+3)c_n^{(3)}=0.
\]
To prove this, we use Theorem~\ref{thm:8} with $k=3$, and write $a_j^{(3,n+1)}$ 
as a binomial sum. Then we get
\[
c_n^{(3)}= \frac{n+1}{6(4n+3)!}\sum_{j=0}^{2n+2}(-1)^{n+j}(n+j)!(3n+2-j)!
\sum_{i=0}^{n+1}\binom{n+1}{i}\binom{n+1-i}{j-2i}4^{j-2i}.
\]
Finally we apply ``creative telescoping" (twice) to this combinatorial sum,
for instance by using C.~Koutschan's Mathematica package 
\texttt{HolonomicFunctions} \cite{Ko}.

\section{Additional remarks and questions}

{\bf 1.} Using the identity \eqref{3.10}, which involves a special higher-order
Bernoulli polynomial, and using the notation of \eqref{1.3}, we can rewrite the
identity \eqref{1.1} as
\[
S_{n,0}(z) = \frac{1}{n!}B_n^{(n+1)}((n+1)z).
\]
Similarly, the second part of Theorem~\ref{thm:1} can be rephrased to state 
that for all positive integers $n$ and $k$, the polynomial
$z(z-1)B_n^{(n+1)}((n+1)z)$ divides $S_{n,k}(z)$.

\medskip
{\bf 2.} Related to this, if we divide $S_{n,1}(z)$ by 
$z(z-1)B_n^{(n+1)}((n+1)z)$, we get a sequence of polynomials of degree at
most $n-1$; see Table~1. The first few of these polynomials are listed in
Table~3, normalized so that their constant coefficients are 1; we denote 
them by $p_n(x)$. 

\bigskip
\begin{center}
{\renewcommand{\arraystretch}{1.2}
\begin{tabular}{|r|l|}
\hline
$n$ & $p_n(z)$ \\
\hline
1 & $1$ \\
2 & $1-2z$ \\
3 & $1-z+z^2$ \\
4 & $1+\frac{1}{6}z-\frac{13}{2}z^2+\frac{13}{3}z^3$ \\
5 & $1+\frac{3}{2}z-\frac{27}{2}z^2+24z^3-12z^4$ \\
6 & $1+\frac{179}{60}z-\frac{473}{24}z^2+29z^3-\frac{571}{24}z^4+\frac{571}{60}z^5$ \\
\hline
\end{tabular}}

\medskip
{\bf Table~3}: $p_n(z)$ for $1\leq n\leq 6$.
\end{center}

\bigskip
It follows from Theorem~\ref{thm:1} that these polynomials satisfy the symmetry
property $p_n(1-z)=(-1)^{n-1}p_n(z)$, which in turn implies that $p_n(z)$ is
divisible by $2z-1$ when $n$ is even. Apart from this, can anything else be 
said about the polynomials $p_n(z)$? The fact that a relatively large prime 
(namely 571) appears in the leading coefficient of
$p_6(z)$ seems to indicate that the leading coefficients of these polynomials,
and indeed of the polynomials $S_{n,k}(z)$, are not as straightforward as the
coefficients of $z$ (see, especially, Corollary~\ref{cor:9}).

\medskip
{\bf 3.} Again related to the previous point, we note that each product of 
Bernoulli polynomials in \eqref{1.3} has degree 
\[
\sum_{i=1}^m(k+1+j_i) = m(k+1)+(k+1)(n-m)+k = (k+1)(n+1)-1,
\]
which is independent of $m$. When $k$ is even, then all terms in \eqref{1.3} are 
positive, and since the Bernoulli polynomials are monic, we may conclude that
the degree of $S_{n,k}(z)$ is also $(k+1)(n+1)-1$. However, this is not clear
when $k$ is odd and $S_{n,k}(z)$ is therefore an alternating sum in $m$.

\medskip
{\bf 4.} We will now see that there is a close connection between the function
$F_k(x,z)$, defined by \eqref{2.1}, and the {\it Lerch transcendent} (also 
known as the {\it Lerch zeta function\/})
\begin{equation}\label{5.a}
\Phi(z,s,v) := \sum_{n=0}^\infty\frac{z^n}{(v+n)^s},
\end{equation}
with $|z|<1$ and $v\neq 0, -1, -2,\ldots$. For various properties and 
identities see, e.g., \cite[Sect.~1.1]{EMOT}. One of these identities is
\begin{equation}\label{5.b}
\Phi(z,-m,v) = \frac{m!}{z^v}\big(\log\frac{1}{z}\big)^{-m-1} - \frac{1}{z^v}
\sum_{r=0}^\infty\frac{B_{m+r+1}(v)}{m+r+1}\cdot\frac{(\log{z})^r}{r!}, 
\end{equation}
where $m$ is a positive integer and $|\log{z}|<2\pi$ (see \cite[p.~30]{EMOT}.)
Replacing $m, r, v$ and $z$ by $k, m ,z$ and $e^x$, respectively, \eqref{5.b}
immediately yields
\[
(-x)^{k+1}\frac{e^{xz}}{k!}\Phi(e^x,-k,z) = 1+(-1)^k\frac{x^{k+1}}{k!}
\sum_{m=0}^\infty\frac{B_{m+k+1}(z)}{m+k+1}\frac{x^m}{m!},
\]
and thus, with \eqref{2.1} we have
\begin{equation}\label{5.c}
F_k(x,z) = (-x)^{k+1}\frac{e^{xz}}{k!}\Phi(e^x,-k,z).
\end{equation}
Without going into further details, we mention that Boyadzhiev 
\cite[Eq.~(6.5)]{Bo} expresses Lerch transcendents such as the ones on the 
right of \eqref{5.c} in terms of Apostol-Bernoulli polynomials, which in turn
can be written as sums involving Eulerian polynomials; see \cite[Eq.~(4.7)]{Bo}.
Putting everything together, we get \eqref{2.9} again, as expected.
Furthermore, combining \eqref{5.c} with \eqref{5.a}, we get
\begin{equation}\label{5.d}
F_k(x,z) = \frac{(-x)^{k+1}}{k!}\sum_{n=0}^\infty(n+z)^ke^{(n+z)x},
\end{equation}
while \eqref{2.11} gives
\[
F_k(-x,1-z) = \frac{(-x)^{k+1}}{k!}\sum_{m=1}^\infty(m-1+z)^ke^{(1-z-m)x}.
\]
The right-hand side of this last identity is the same as that of \eqref{5.d}, 
and by \eqref{2.1a} the two identities are consistent.

\medskip
{\bf 5.} In addition to Corollary~\ref{cor:9}, one can say a little more about
the coefficients $a_j^{(k,n)}$ that occur in Theorem~\ref{thm:8}. Indeed, to
find an expression for $a_j^{(k,n)}=[y^j](A_k(y)/y)^n$, we apply the multinomial
theorem to \eqref{2.7}, obtaining
\begin{align*}
\frac{A_k(y)^n}{y^n} &= \sum_{j_1+j_2+\cdots+j_k=n}\binom{n}{j_1,\ldots,j_k}
A(k,1)^{j_1}\big(A(k,2)y\big)^{j_2}\cdots \big(A(k,k)y^{k-1}\big)^{j_k}\\
&= \sum_{j_1+j_2+\cdots+j_k=n}\binom{n}{j_1,\ldots,j_k}
A(k,1)^{j_1}\cdots A(k,k)^{j_k}y^{j_2+2j_3+\cdots+(k-1)j_k},
\end{align*}
and therefore
\begin{equation}\label{5.1}
\bigg(\frac{A_k(y)}{y}\bigg)^n 
= \sum_{j=0}^{n(k-1)}\left(\sum\binom{n}{j_1,\ldots,j_k}A(k,1)^{j_1}
\cdots A(k,k)^{j_k}\right)y^j,
\end{equation}
where the inner sum is taken over all $j_1,\ldots, j_k$ satisfying
\begin{equation}\label{5.2}
\begin{cases}
j_1+j_2+\cdots+j_k=n,\\
j_2+2j_3+\cdots+(k-1)j_k=j,
\end{cases}
\end{equation}
Now, by definition and \eqref{5.2} we have
\[
a_j^{(k,n)} 
= \sum\binom{n}{j_1,\ldots,j_k}A(k,1)^{j_1}A(k,2)^{j_2}\cdots A(k,k)^{j_k},
\]
where the summation is again over all $j_1,\ldots, j_k$ satisfying \eqref{5.2}.
This identity can be rewritten as
\begin{equation}\label{5.3}
a_j^{(k,n)} = \sum_{j_1=0}^n\frac{n!}{j_1!(n-j_1)!}
\sum\binom{n-j_1}{j_2,\ldots,j_k}A(k,2)^{j_2}\cdots A(k,k)^{j_k},
\end{equation}
where we have used the fact that $A(k,1)=1$, and the inner sum is over all 
$j_2,\ldots, j_k$ satisfying
\[
\begin{cases}
j_2+\cdots+j_k=n-j_1,\\
j_2+2j_3+\cdots+(k-1)j_k=j.
\end{cases}
\]
Finally, setting $r:=n-j_1$, we can slightly simplify \eqref{5.3} as
\begin{equation}\label{5.4}
a_j^{(k,n)} = \sum_{r=0}^n\binom{n}{r}
\sum\binom{r}{j_2,\ldots,j_k}A(k,2)^{j_2}\cdots A(k,k)^{j_k},
\end{equation}
where the inner sum is over all $j_2,\ldots, j_k$ satisfying
\[
\begin{cases}
j_2+\cdots+j_k=r,\\
j_2+2j_3+\cdots+(k-1)j_k=j.
\end{cases}
\]
This last inner sum is reminiscent of an (ordinary) Bell polynomial, but
is still somewhat different. 

\medskip
{\bf 6.} For a different approach to the coefficients $a_j^{(k,n)}$ we use the
infinite series \eqref{2.8} and rewrite it as
\begin{equation}\label{5.5}
\frac{\frac{1}{y}A_k(y)}{(1-y)^{k+1}} = \sum_{j=0}^\infty (j+1)^ky^j,
\end{equation}
for $k\geq 1$. Raising both sides of \eqref{5.5} to the $n$th power, we get
\begin{equation}\label{5.6}
\frac{(A_k(y)/y)^n}{(1-y)^{(k+1)n}} = \sum_{\nu=0}^\infty\bigg(
\sum_{\substack{j_1,\ldots,j_n\geq 0\\j_1+\cdots+j_n=\nu}}\big((j_1+1)\cdots
(j_n+1)\big)^k\bigg)y^\nu. 
\end{equation}
We denote the inner multiple sum by $u_\nu^{(k,n)}$ and rewrite it as
\begin{equation}\label{5.7}
u_\nu^{(k,n)} = \sum_{\substack{j_1,\ldots,j_n\geq 1\\j_1+\cdots+j_n=\nu+n}}
\big(j_1\cdots j_n\big)^k.
\end{equation}
Using \eqref{5.6}, \eqref{5.7} and the definition of $a_j^{(k,n)}$, we get
\[
a_j^{(k,n)} = \sum_{\nu=0}^j(-1)^{j-\nu}\binom{(k+1)n}{j-\nu}u_\nu^{(k,n)}.
\]
It is therefore of interest to find out more about the numbers $u_\nu^{(k,n)}$.

\end{document}